\documentclass[10pt]{article}
\usepackage{tocloft}

\usepackage{latexsym,amsfonts,euscript,amsthm}

\usepackage{amssymb}
\usepackage{stmaryrd}
\usepackage{mathrsfs,amsmath}
\usepackage{leftidx}
\usepackage{mathabx,enumerate}
\usepackage[shortlabels]{enumitem}

\usepackage[figuresright]{rotating}
\usepackage[table]{xcolor}
\usepackage{multirow}
\usepackage{booktabs}
\usepackage{tabularx}

\usepackage{tocbibind} 

\usepackage{float}
\usepackage{tikz-cd}
\usetikzlibrary{cd}

\usepackage[pagebackref]{hyperref}

\usepackage{hyperref}

\hypersetup{%
  colorlinks=false,
  linktoc=all,
  citebordercolor=green,
  linkbordercolor=red,
  urlbordercolor=blue, 
  pdfborder={0 0 1.5}, 
  citecolor=black,
    linkcolor=black,
  urlcolor=black
}

\makeatletter
\renewcommand{\@maketitle}{%
  \newpage
  \null
  \vskip 2em%
  \begin{center}%
    \let \footnote \thanks
    {\huge \bfseries \@title \par}
    \vskip 1.5em%
    {\large \textsc{\@author}\par}%
    \vskip 1em%
    {\large \@date \par}%
  \end{center}%
  \par
  \vskip 1.5em
}
\makeatother

\renewcommand{\contentsname}{\small\bfseries Contents}

\newcommand{\centeredtitle}[1]{%
  \begin{center}
    {\small\bfseries #1}%
  \end{center}
  \vspace{1ex}%
}

\makeatletter
\AtBeginDocument{%
  \renewcommand{\tableofcontents}{%
    \centeredtitle{\contentsname}%
    \@starttoc{toc}%
  }%
}
\makeatother

\renewenvironment{abstract}{%
  \centeredtitle{\abstractname}%
  \vspace{-0.6\baselineskip}
  \quotation
}{%
  \endquotation
}

\newcommand{\ackname}{Acknowledgements}
\makeatletter
\if@titlepage
  \newenvironment{acknowledgement}{%
    \titlepage
    \null\vfil
    \@beginparpenalty\@lowpenalty
    \begin{center}%
      \bfseries \ackname
      \@endparpenalty\@M
    \end{center}}%
  {\par\vfil\null\endtitlepage}
\else
  \newenvironment{acknowledgement}{%
    \if@twocolumn
      \section*{\ackname}%
    \else
      \small
      \begin{center}%
        {\bfseries \ackname\vspace{-0.5em}\vspace{\z@}}%
      \end{center}%
      \quotation
    \fi}
    {\if@twocolumn\else\endquotation\fi}
\fi
\makeatother

\makeatletter 
\@addtoreset{equation}{section}
\makeatother  

\makeatletter
\def\thanks#1{\g@addto@macro\@thanks{\footnotetext{#1}}}
\makeatother

\makeatletter
\newtoks\address@list
\newcommand{\addaddress}[3]{%
  \address@list=\expandafter{\the\address@list
    \textsc{#1, #2.} \par
    \textit{Email address}: \href{mailto:#3}{\textsf{#3}} \par
    \addvspace{\medskipamount}%
  }%
}
\newcommand{\printaddresses}{%
  \AtEndDocument{\bigskip{\footnotesize
    \the\address@list
  }}%
}
\makeatother

\newtheorem{lem}{\bf Lemma}[section]

\newtheorem{thm}[lem]{\bf Theorem}

\newtheorem{mainthm}{Theorem}

\newtheorem{maincor}[mainthm]{Corollary}

\title{Finite groups with a unique real $2$-block
\thanks{\textbf{Keywords}\,\, Real blocks, characters.\\
\textbf{2020 MR Subject Classification}\,\, Primary 20C20, 20C15.
}}

\author{\textsc{Yu Zeng}, \textsc{Fuming Jiang}*\thanks{*Corresponding author.}\\
}
\addaddress{Yu Zeng}{Department of Mathematics, Suzhou University of Technology, Changshu, Jiangsu, 215500, China}{yuzeng2004@163.com}
\addaddress{Fuming Jiang}{School of Mathematics and Statistics, Southwest University, Chongqing, 400715, China}{fuming.jiang@hotmail.com}

\printaddresses

\date{}

\begin{document}

\maketitle

\begin{abstract}
  Let $p$ be a prime, and let $B$ be a $p$-block of a finite group $G$.
  A $p$-block $B$ is called \emph{real} if the set of irreducible ordinary characters contained in $B$ 
  is invariant under complex conjugation.
  Motivated by Harris' classification of the finite groups with a unique $p$-block for an arbitrary prime $p$,
  and by McHugh and Schaeffer Fry's classification of the finite quasi-simple groups with a unique real $2$-block,
  we classify, in this paper, all finite groups admitting exactly one real $2$-block.
\end{abstract}


\section{Introduction}

Given a prime $p$, classifying finite groups by the number of their $p$-blocks is a classical and fundamental problem in the representation theory of finite groups, with particular interest in groups admitting few $p$-blocks.
A landmark result in this direction is due to Harris \cite{harris85}, who completely classified finite groups with exactly one $p$-block for an arbitrary prime $p$.

Let $B$ be a $p$-block of a finite group $G$ for a prime $p$, and let $\mathrm{Irr}(B)$ denote the set of irreducible ordinary characters belonging to $B$.
 A block $B$ is called \emph{real} if the complex conjugate $\overline{\chi}$ lies in $\mathrm{Irr}(B)$ for every $\chi\in\mathrm{Irr}(B)$. 
 The notion of real $p$-blocks was first introduced by Brauer \cite[Page 518]{brauer71}, who established a general framework and a series of fundamental theorems for real $p$-blocks for arbitrary primes $p$. 
 Among all primes, the theory of real $2$-blocks is the most refined and well-developed. 
 Specifically, a $2$-block is real if and only if it contains at least one real-valued irreducible ordinary character, a characterization unique to the prime $2$.

Harris's classification of finite groups with a unique $p$-block naturally suggests a refined problem: classifying finite groups possessing exactly one real $2$-block. 
Recently, McHugh and Schaeffer Fry \cite{mchugh26} classified all finite quasi-simple groups satisfying this condition.
In the present paper, we continue this line of investigation and classify all finite groups admitting a unique real $2$-block.

Before stating our main results, we recall several standard definitions.
Let $G$ be a finite group and $p$ a prime.
We say $G$ is \emph{$p$-constrained} if $\mathrm{C}_{G}(\mathrm{O}_{p',p}(G)/\mathrm{O}_{p'}(G))\leq \mathrm{O}_{p',p}(G)$.
A finite group is \emph{quasi-simple} if it is a perfect central extension of a finite non-abelian simple group.
A \emph{component} $L$ of $G$ is a subnormal quasi-simple subgroup,
and if $L/\mathrm{Z}(L)\cong S$ for a finite non-abelian simple group $S$, 
then we say $L$ is of \emph{type} $S$.

\begin{mainthm}\label{thmA}
	Let $G$ be a finite group, and let $N=\mathrm{O}^{2'}(G)$.
  Then $G$ has a unique real $2$-block if and only if 
  the following hold.
  \begin{enumerate}[\rm (1)]
    			\item  Every $N$-conjugacy class contained in $\mathrm{O}_{2'}(N)$ has odd size. 
     \item Set $T=N/\mathrm{O}_{2'}(N)$. 
     Then $T=E\times H$ where $E$ and $H$ satisfy the following condition.
     \begin{enumerate}[\rm (i)]
      \item Either $E=1$, or $E=S_1\times S_2\times \cdots \times S_t$ where $S_i\in \{ M_{11}, M_{23}, \mathrm{PSL}_3(3), \mathrm{PSU}_3(3)\}$ for $1\leq i\leq t$.
\item Either $H$ is $2$-constrained, or all components of $H$ are of type $M_{22}$ or $M_{24}$ (that is, $L/\mathrm{Z}(L)\in \{ M_{22}, M_{24} \}$ for all components $L$ of $H$).
     \end{enumerate}
  \end{enumerate}
\end{mainthm}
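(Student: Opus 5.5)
The plan is to reduce step by step from $G$ to $N=\mathrm{O}^{2'}(G)$, then to $T=N/\mathrm{O}_{2'}(N)$, and finally to the components of $T$. At the last stage we invoke the McHugh--Schaeffer Fry classification of quasi-simple groups with a unique real $2$-block.

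\textbf{Reduction to $N$.} Since $G/N$ has odd order, every block of $G$ covers a $G$-orbit of blocks of $N$. The principal block of $N$ is covered only by blocks of $G$ lying over it. Let $b$ be a real block of $N$. Because $|G:N|$ is odd, a real character of $N$ has a real extension to its stabiliser (Gallagher together with oddness), and hence induces to a real character of $G$; so real blocks of $G$ correspond to real $G$-orbits of blocks of $N$. By oddness, a real $G$-orbit must contain a real block of $N$. I therefore expect to prove that $G$ has a unique real $2$-block if and only if $N$ has one, which explains why the conditions of the theorem only involve $N$.

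\textbf{Reduction from $N$ to $T$, giving condition (1).} Put $K=\mathrm{O}_{2'}(N)$. Blocks of $N$ covering $\theta\in\mathrm{Irr}(K)$ are governed by Fong reduction. A real block of $N$ covers a $K$-character $\theta$ whose $N$-orbit is closed under complex conjugation. Since $K$ has odd order, the only real character of $K$ is $1_K$. So a real block other than the principal one arises exactly when some $\theta\neq 1_K$ has $\bar\theta=\theta^n$ for some $n\in N$. Such an $n$ can be taken to be a $2$-element; this is where $N=\mathrm{O}^{2'}(N)$ is used. Conversely, such a $\theta$ produces a real block, via a real character in the Fong correspondent.

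By a Brauer permutation lemma argument applied to the action of $2$-elements on $\mathrm{Irr}(K)$ and on the classes of $K$, the condition ``no nontrivial $\theta$ is $N$-conjugate to $\bar\theta$'' should be equivalent to ``no nontrivial class of $K$ is $N$-conjugate to its inverse''. This in turn is equivalent to every $N$-class in $K$ having odd size. Indeed, an inverse-closed $N$-class in $K$ has even size, since inversion is fixed-point-free off $1$; conversely, an even-size class admits a $2$-element of $N$ acting on it, and one argues through a Sylow $2$-subgroup that some class is inverted.

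Once condition (1) holds, the only $K$-orbit relevant to real blocks is $\{1_K\}$, so the real blocks of $N$ are exactly the real blocks of $T$. Hence $N$ has a unique real $2$-block if and only if (1) holds and $T$ has a unique real $2$-block.

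\textbf{Analysis of $T$, with $\mathrm{O}_{2'}(T)=1$ and $T=\mathrm{O}^{2'}(T)$.} Let $F^*(T)=\mathrm{O}_2(T)\,E(T)$.
\begin{itemize}
\item If $E(T)=1$, then $T$ is $2$-constrained and has a unique $2$-block, so this case is harmless.
\item Otherwise, write $E(T)$ as a central product of components $L_i$. Any real block of $T$ covers a $T$-stable set of real blocks of $E(T)$.
\item Using the extension argument above together with Murai-type results on real blocks covering real blocks, I would show that $T$ has a unique real $2$-block only if each $L_i$ (modulo $\mathrm{O}_2$) has a unique real $2$-block, up to the twisting by $T$.
\item Applying McHugh--Schaeffer Fry, each $L_i/\mathrm{Z}(L_i)$ is then one of $M_{11}$, $M_{22}$, $M_{23}$, $M_{24}$, $\mathrm{PSL}_3(3)$, $\mathrm{PSU}_3(3)$.
\end{itemize}

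Next comes the splitting $T=E\times H$. The groups $M_{11}$, $M_{23}$, $\mathrm{PSL}_3(3)$ and $\mathrm{PSU}_3(3)$ should be exactly those $S$ for which $\mathrm{Aut}(S)$, or a relevant cover or extension, introduces a second real block. This forces these components to be simple with trivial outer action inside $\mathrm{O}^{2'}$. Therefore they split off as a direct factor $E$, using the trivial Schur multiplier for $2$ in the relevant cases and $\mathrm{C}_T(E)$ as a complement. By contrast, $M_{22}$ and $M_{24}$ survive arbitrary extensions, including $2.M_{22}$ and $M_{22}.2$, and may sit inside a $2$-constrained-by-component group $H$. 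The converse direction checks, for each listed configuration, that real blocks of $E\times H$ are products of real blocks, and that $H$ has a unique real block via covering arguments.

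\textbf{Main obstacle.} The hardest part is this last step. It requires proving that components of type $M_{11}$, $M_{23}$, $\mathrm{PSL}_3(3)$ and $\mathrm{PSU}_3(3)$ must split off as direct factors, while $M_{22}$ and $M_{24}$ may be embedded arbitrarily. This needs case-by-case information on the real $2$-blocks of the automorphism groups and covering groups of these six simple groups. It also needs a careful argument that $T$ permuting the components does not create new real blocks, which I would attack through wreath-product type character constructions.
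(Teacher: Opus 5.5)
Your architecture matches the paper's: the odd-index reduction from $G$ to $N$, the passage from $N$ to $T$ via odd class sizes in $\mathrm{O}_{2'}(N)$, McHugh--Schaeffer Fry for the components, and Harris together with Lemma~\ref{lem: basic}(3) for the converse. However, the two steps that carry the theorem are only announced, not argued.

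First, the implication ``$T$ has a unique real $2$-block $\Rightarrow$ every component has one'' does not follow from your extension argument, because that argument needs odd index. Here $\mathrm{N}_T(L)/L\mathrm{C}_T(L)$ embeds in $\mathrm{Out}(L/\mathrm{Z}(L))$ and can have even order. In that case there is no general reason for a real character of $S=L/\mathrm{Z}(L)$ outside $B_0(S)$ to lie under a real character of $\mathrm{N}_T(L)$. The paper handles this with Theorem~\ref{thm: almost simple} (property $(*)$ for almost simple groups, proved by a CFSG case analysis). It then induces from the normalizer of $\overline{L}$, which contains the inertia group of $\alpha\times 1\times\cdots\times 1$ (Theorem~\ref{thm: reduction}). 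An appeal to ``Murai-type results'' does not supply this.

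One argument that would:
\begin{enumerate}[(i)]
\item If $M\unlhd X$ with $X/M$ a $2$-group, each block of $M$ is covered by a unique block of $X$. Hence a real block is covered by a real one.
\item For odd prime index, use Lemma~\ref{lem: real chars}(2) together with Gow's theorem.
\item Since $\mathrm{Out}(S)$ is solvable, (i) and (ii) show that a non-principal real block of $\overline{L}\times \mathrm{C}_{\overline{T}}(\overline{L})$ is covered by a non-principal real block of $\mathrm{N}_{\overline{T}}(\overline{L})$. Here bars denote the quotient by the centre of the normal closure of $L$.
\item The stabilizer of the covered block of the normal closure lies in $\mathrm{N}_{\overline{T}}(\overline{L})$, so Fong--Reynolds transports this block to $\overline{T}$. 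Inflating a real character from it gives a real character of $T$ outside $B_0(T)$, as in the paper.
\end{enumerate}
Your bullet ``a real block of $T$ covers a $T$-stable set of real blocks of $E(T)$'' is false as stated: the covered blocks need only be $T$-conjugate to their duals. Also, in Step~1 the correspondence between real blocks of $G$ and real orbits of blocks of $N$ is only asserted. The paper proves the case it needs, namely $B_0(N)$, by prime-index induction and Alperin--Dade.

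Second, the splitting $T=E\times H$ is left open, and your heuristics point the wrong way. $M_{11}$ and $M_{23}$ have trivial outer automorphism group and trivial Schur multiplier, so a criterion via $\mathrm{Aut}(S)$ or covers cannot single them out. What separates the four groups from $M_{22}$ and $M_{24}$ is that they have non-real characters $\alpha$ of $2$-defect zero (in particular, more than one $2$-block). The paper (Theorem~\ref{thm: M11 M23}) uses these characters twice.
\begin{enumerate}[(a)]
\item For $\mathrm{PSL}_3(3)$ and $\mathrm{PSU}_3(3)$: if $\mathrm{N}_T(S)>S\mathrm{C}_T(S)$, a $\mathsf{GAP}$ check gives a real $2$-defect-zero character of $\mathrm{N}_T(S)$ over $\alpha\times 1_{\mathrm{C}_T(S)}$. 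Inducing it gives a real character outside $B_0(T)$, which forces $\mathrm{N}_T(S)=S\mathrm{C}_T(S)$.
\item Suppose some $g$ induces an involution on the conjugates $S_i$. Put $\alpha$ and $\overline{\alpha}$ on each swapped pair and a real character on the fixed points, giving $\theta$. Extend $\theta$ by tensor induction (Lemma~\ref{lem: tensor induction}, which needs (a)) to $\psi$ on $\mathrm{I}_T(\theta)$ with $\psi^g=\overline{\psi}$. Then $\psi^T$ is a real irreducible character outside $B_0(T)$.
\end{enumerate}
So permuting such components \emph{does} create a new real block, which is the opposite of what you intended to prove. The conclusion is that the permutation image has odd order, hence is trivial because $T=\mathrm{O}^{2'}(T)$. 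This, not Step~2, is where $\mathrm{O}^{2'}$ is essential. Without (a) and (b) you cannot reach $T=E\times\mathrm{C}_T(E)$.
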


By a classical result of Gow \cite[Theorem 5.1]{gow79}, every real $2$-block contains 
a real-valued irreducible 2-rational ordinary character with Schur index $1$ over $\mathbb{Q}$.
 This yields the following immediate consequence of Theorem \ref{thmA}.

\begin{maincor}\label{corB}
	Let $G$ be a finite group, and let $N=\mathrm{O}^{2'}(G)$.
  Then every 
  real-valued irreducible $2$-rational ordinary character of $G$ with Schur index $1$ over $\mathbb{Q}$
  lies in the principal $2$-block of $G$
   if and only if 
  the following hold.
  \begin{enumerate}[\rm (1)]
    			\item  Every $N$-conjugacy class contained in $\mathrm{O}_{2'}(N)$ has odd size. 
     \item Set $T=N/\mathrm{O}_{2'}(N)$. 
     Then $T=E\times H$ where $E$ and $H$ satisfy the following condition.
     \begin{enumerate}[\rm (i)]
      \item Either $E=1$, or $E=S_1\times S_2\times \cdots \times S_t$ where $S_i\in \{ M_{11}, M_{23}, \mathrm{PSL}_3(3), \mathrm{PSU}_3(3)\}$ for $1\leq i\leq t$.
\item Either $H$ is $2$-constrained, or all components of $H$ are of type $M_{22}$ or $M_{24}$ (that is, $L/\mathrm{Z}(L)\in \{ M_{22}, M_{24} \}$ for all components $L$ of $H$).
     \end{enumerate}
  \end{enumerate}
\end{maincor}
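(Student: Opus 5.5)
The plan is to show that the condition in Corollary \ref{corB} is equivalent to $G$ having a unique real $2$-block. Once that equivalence is in hand, the corollary follows verbatim from Theorem \ref{thmA}. Throughout, write $\mathcal{X}(G)$ for the set of real-valued irreducible $2$-rational ordinary characters of $G$ with Schur index $1$ over $\mathbb{Q}$.

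First, suppose $G$ has a unique real $2$-block. The principal $2$-block $B_0(G)$ is always real, because it contains the trivial character, which is real-valued. Any $\chi\in\mathcal{X}(G)$ is real-valued, so the block $B$ containing $\chi$ satisfies $\overline{\chi}=\chi\in\mathrm{Irr}(B)$. For $p=2$, a block containing one real-valued irreducible character is real, as recalled in the introduction. Hence $B$ is real, and uniqueness forces $B=B_0(G)$. Therefore every member of $\mathcal{X}(G)$ lies in the principal $2$-block.

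Conversely, suppose every member of $\mathcal{X}(G)$ lies in $B_0(G)$, and let $B$ be any real $2$-block of $G$. By Gow's theorem \cite[Theorem 5.1]{gow79}, $B$ contains some character $\chi\in\mathcal{X}(G)$. By hypothesis $\chi\in\mathrm{Irr}(B_0(G))$. Since blocks partition $\mathrm{Irr}(G)$, we get $B=B_0(G)$, so $G$ has a unique real $2$-block. Combining both directions with Theorem \ref{thmA} gives exactly conditions (1) and (2) of Corollary \ref{corB}.

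The only substantive input is Gow's existence theorem; the rest is formal. The one point to check carefully is the convention that a $2$-block containing a real-valued character is real, that is, that $\overline{B}=B$ whenever $B$ and $\overline{B}$ share a character. This is immediate, since complex conjugation permutes the blocks and distinct blocks are disjoint.
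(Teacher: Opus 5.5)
Your proposal is correct and follows the paper's own argument. The paper likewise uses Gow's theorem \cite[Theorem 5.1]{gow79} to see that every real $2$-block contains a real-valued $2$-rational character with Schur index $1$ over $\mathbb{Q}$, so the corollary's hypothesis is equivalent to $G$ having a unique real $2$-block, and Theorem~\ref{thmA} then gives conditions (1) and (2).
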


The paper is organized as follows. 
In Section 2, we gather preliminary notions and results concerning characters and blocks of finite groups and of finite groups of Lie type, which will be used throughout the paper.
In Section 3, 
we establish several results on real blocks, including a correspondence theorem for real $2$-blocks 
between a finite group and specific quotient groups of it.
Finally, in Section 4, we prove our main results.

\section{Preliminaries}

We begin by collecting a number of preliminary notions and results that will be useful in what follows.
Throughout the paper, $G$ always stands for a finite group, while $p$ and $\ell$ always denote primes.
Unless stated otherwise, whenever we refer simply to (irreducible) characters, we mean (irreducible) ordinary characters;
when we refer to blocks of a finite group, we mean $p$-blocks.
Given a positive integer $n$ and a prime $p$, write $n_p$ for the largest power of $p$ dividing $n$, and $n_{p'}$ for the $p'$-part $n/n_p$ of $n$.
Furthermore, we refer to \cite{huppert67,kurzweil04} as sources for finite group theory.

\subsection{Characters and conjugacy classes}

We first recall some basic notions and facts from the character theory of finite groups,
and we refer to \cite{isaacs76,huppert98,navarro18} for further details.

Let $\mathrm{Irr}(G)$ denote the set of irreducible (ordinary) characters of a finite group $G$.
For a character $\chi$ of $G$, we write $\mathbb{Q}(\chi)$ for the \emph{field of values} of $\chi$, that is, the subfield of $\mathbb{C}$ generated by the values $\chi(g)$ for all $g \in G$.
We denote by $\overline{\chi}$ the \emph{complex conjugate character} of $\chi$, defined by $\overline{\chi}(g) = \chi(g^{-1})$ for every $g \in G$.
It is well known that $\chi(g^{-1})=\overline{\chi(g)}$ for all $g\in G$, so this definition agrees with taking the pointwise complex conjugate of $\chi$.
A character $\chi$ is said to be \emph{real-valued} if $\mathbb{Q}(\chi) \subseteq \mathbb{R}$, or equivalently, if $\overline{\chi} = \chi$.

Given an element $x \in G$, let $x^G$ denote its $G$-conjugacy class.
We call $x^G$ a \emph{real $G$-conjugacy class} if $x^{-1} \in x^G$; in this case, $x$ is also called a \emph{real element} of $G$.
It is a standard fact that the number of real-valued characters in $\mathrm{Irr}(G)$ equals 
the number of real conjugacy classes of $G$.
As a consequence, an odd-order group has a unique real-valued irreducible character i.e. the principal character.

We fix the following notation for characters relative to normal subgroups, which will be employed consistently throughout the paper.
Let $N \unlhd G$ and let $\theta \in \mathrm{Irr}(N)$.
We always identify each character in $\mathrm{Irr}(G/N)$ with its inflation to $G$, thereby viewing $\mathrm{Irr}(G/N)$ as a subset of $\mathrm{Irr}(G)$. 
We denote by $\mathrm{Irr}(G | \theta)$ the set of irreducible characters of $G$ lying over $\theta$, i.e.
\[
\mathrm{Irr}(G|\theta)=\{ \chi\in \mathrm{Irr}(G): [\chi_N,\theta]>0 \}.
\] 
The inertia subgroup of $\theta$ in $G$ is denoted by $\mathrm{I}_G(\theta)$.

We begin with an elementary observation about real-valued characters under restriction and induction.

\begin{lem}\label{lem: res and ind}
  Let $G$ be a finite group admitting two subgroups $H$ and $K$ such that $H\leq K$.
  If $\chi\in \mathrm{Irr}(G)$ is real-valued, then the character $(\chi_H)^{K}$ is also real-valued.
\end{lem}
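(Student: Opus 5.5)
The plan is to verify directly that $(\chi_H)^K$ coincides with its complex conjugate, using the explicit formula for induced class functions. Note first that $(\chi_H)^K$ is a character of $K$, being induced from a character of $H$. Complex conjugation commutes with restriction, since $\overline{\chi_H}(h)=\overline{\chi(h)}=\overline{\chi}(h)$ for $h\in H$, so $\overline{\chi_H}=(\overline{\chi})_H=\chi_H$ because $\chi$ is real-valued.

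The second step is to check that complex conjugation also commutes with induction. For any class function $\psi$ of $H$ and $k\in K$ we have
\[
\psi^K(k)=\frac{1}{|H|}\sum_{x\in K}\psi^\circ(xkx^{-1}),
\]
where $\psi^\circ$ extends $\psi$ by zero outside $H$. Conjugating both sides term by term gives $\overline{\psi^K}=(\overline{\psi})^K$. Applying this with $\psi=\chi_H$ and using the first step yields $\overline{(\chi_H)^K}=(\overline{\chi_H})^K=(\chi_H)^K$, which is exactly the claim that $(\chi_H)^K$ is real-valued.

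There is no real obstacle here. The only point needing care is remembering that "real-valued" means the character is fixed under complex conjugation, equivalently $\psi(g^{-1})=\psi(g)$. This same equivalence can be checked through the induction formula, because $xk^{-1}x^{-1}=(xkx^{-1})^{-1}$ and membership in $H$ is preserved under taking inverses.
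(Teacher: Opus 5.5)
Your proof is correct and is essentially the paper's argument. The paper packages it as the chain $\mathbb{Q}((\chi_H)^{K})\subseteq \mathbb{Q}(\chi_H)\subseteq \mathbb{Q}(\chi)\subseteq \mathbb{R}$, whose first inclusion is exactly your observation from the induction formula that restriction and induction commute with complex conjugation (indeed with any field automorphism).
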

\begin{proof}
  This follows directly from the inclusions of fields of values $\mathbb{Q}((\chi_H)^{K})\subseteq \mathbb{Q}(\chi_H)\subseteq \mathbb{Q}(\chi)\subseteq \mathbb{R}$.
\end{proof}

We now turn to a result on tensor induced characters, which will serve as a key technical tool in our later  
arguments for non-abelian minimal normal subgroups.

\begin{lem}\label{lem: tensor induction}
	Let $N=S_1\times \cdots \times S_t$ be a non-abelian minimal normal subgroup
	of a finite group $G$,
  where $S_i=S^{x_i}$ for some $x_i\in G$ and $x_1=1$.
  Suppose $\theta=\alpha_1\times \cdots \times \alpha_t\in \mathrm{Irr}(N)$ is $G$-invariant where $\alpha_i\in \mathrm{Irr}(S_i)$, and set $\alpha=\alpha_1$.
    Assume that $\alpha \times 1_{\mathrm{C}_{G}(S)}$ extends to $\psi \in \mathrm{Irr}(\mathrm{N}_{G}(S))$
	such that $\mathbb{Q}(\psi)=\mathbb{Q}(\alpha)$.
	Then $\theta$ extends to the tensor induced character $\chi:=\psi^{\otimes G}\in \mathrm{Irr}(G)$, such that $\mathbb{Q}(\chi) =\mathbb{Q}(\theta)$ and $\mathrm{C}_{G}(N)\leq \ker(\chi)$.
  If, in addition, the cyclic group $\langle \sigma\rangle$ acts via automorphisms on $G$, then $\chi^\sigma=(\psi^\sigma)^{\otimes G}\in \mathrm{Irr}(G)$.
\end{lem}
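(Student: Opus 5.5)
We use the standard theory of tensor induction, as in Isaacs' treatment of extending $G$-invariant characters of a minimal normal subgroup (cf. \cite{isaacs76} and \cite{navarro18}). Set $M=\mathrm{N}_G(S)$. Minimality of $N$ makes $G$ act transitively on $\{S_1,\dots,S_t\}$, so $|G:M|=t$ and $\{x_1,\dots,x_t\}$ is a right transversal of $M$ in $G$. The tensor induced character $\chi=\psi^{\otimes G}$ is then an honest character of $G$ of degree $\psi(1)^t$. Its value at $g$ is given by the usual formula: a product, over the cycles of $g$ acting on the cosets $Mx_i$, of values of $\psi$ at suitable elements of $M$.

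\textbf{Step 1: $\chi$ extends $\theta$ and kills $\mathrm{C}_G(N)$.} First I will compute $\chi_N$. An element $n=s_1\cdots s_t\in N$ fixes every coset $Mx_i$. The formula therefore gives
\[
\chi(n)=\prod_i \psi(x_i n x_i^{-1}).
\]
Here $x_i n x_i^{-1}$ lies in $N\le M$, and its $S$-component is $x_i s_i x_i^{-1}$. Since $\psi$ restricts to $\alpha\times 1_{\mathrm{C}_G(S)}$ on $S\times \mathrm{C}_G(S)$, and the other factors $S_j$ with $j\neq 1$ lie in $\mathrm{C}_G(S)$, we get $\psi(x_i n x_i^{-1})=\alpha(x_i s_i x_i^{-1})$.

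Next, $G$-invariance of $\theta$ gives $\alpha_i=\alpha^{x_i}$. Indeed, conjugating $\theta$ by $x_i$ and reading off the $S_i$-factor yields this, after choosing indexing consistently with $S_i=S^{x_i}$. Hence $\chi_N=\theta$, so $\chi(1)=\theta(1)$ and $\chi$ is irreducible.

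For $c\in \mathrm{C}_G(N)$, note first that $c$ normalizes each $S_i$, so $c$ fixes all cosets. Moreover $x_i c x_i^{-1}\in \mathrm{C}_G(S)$, so $\chi(c)=\prod_i \psi(x_icx_i^{-1})=\psi(1)^t=\chi(1)$. This gives $\mathrm{C}_G(N)\le\ker\chi$.

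\textbf{Step 2: fields of values.} Each value of $\chi$ is a product of values of $\psi$, so $\mathbb{Q}(\chi)\subseteq\mathbb{Q}(\psi)=\mathbb{Q}(\alpha)$. We also have $\mathbb{Q}(\alpha)\subseteq\mathbb{Q}(\theta)$, because $\theta$ restricted to $S$ is $\alpha$ times the constant $\prod_{i\ge2}\alpha_i(1)$. Conversely, $\chi_N=\theta$ gives $\mathbb{Q}(\theta)\subseteq\mathbb{Q}(\chi)$. Thus
\[
\mathbb{Q}(\chi)=\mathbb{Q}(\theta)=\mathbb{Q}(\alpha).
\]

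\textbf{Step 3: action of $\sigma$.} I expect this to be the main obstacle, since one must interpret $\psi^\sigma$ carefully. If $\sigma$ acts as a Galois automorphism, then $\chi^\sigma$ is obtained by applying $\sigma$ to each factor in the product formula, which yields $(\psi^\sigma)^{\otimes G}$ directly.

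If instead $\sigma$ acts as a group automorphism of $G$, I will use that tensor induction is natural under isomorphisms. Applying $\sigma$ transports the data $(M,\{x_i\},\psi)$ to $(M^\sigma,\{x_i^\sigma\},\psi^\sigma)$, and tensor induction does not depend on the choice of transversal. The key point to check is that the product formula is unchanged when one replaces the transversal and the subgroup compatibly. Irreducibility of $\chi^\sigma$ is clear, as it is the image of an irreducible character under a Galois or group automorphism.
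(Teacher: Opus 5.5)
Your proposal is correct and follows essentially the same route as the paper: you use the product formula for $\psi^{\otimes G}$ to get $\chi_N=\theta$ (checking this directly, where the paper cites the proof of Navarro's Corollary 10.5), then the chain $\mathbb{Q}(\theta)\subseteq\mathbb{Q}(\chi)\subseteq\mathbb{Q}(\psi)=\mathbb{Q}(\alpha)\subseteq\mathbb{Q}(\theta)$, and finally $\chi(c)=\prod_j\psi(x_jcx_j^{-1})=\chi(1)$ for $c\in\mathrm{C}_G(N)$. For the $\sigma$-part, the ``key point'' you leave to check is exactly what the paper writes out: from $x_jg=h_j(g)x_{j\pi(g)}$ one gets $x_j^\sigma g^\sigma=h_j(g)^\sigma x_{j\pi(g)}^\sigma$, so $g^\sigma$ induces the same permutation relative to the transversal $\{x_j^\sigma\}$ of $\mathrm{N}_G(S)^\sigma$, and the product formula then gives $(\psi^\sigma)^{\otimes G}(g^\sigma)=\chi(g)=\chi^\sigma(g^\sigma)$.
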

\begin{proof}
	Set $ H = \mathrm{N}_G(S) $. 
  Then $ \{x_1, x_2,\ldots, x_t\} $ is a transversal for $ H $ in $ G $.
  Since $\theta$ is $G$-invariant, we have $\alpha_i=\alpha^{x_i}$.
  Write $\chi=\psi^{\otimes G}$.
	As in the proof of \cite[Corollary 10.5]{navarro18}, we have $\chi_N=\theta$.
  In particular, $\chi$ is an irreducible character of $G$.

  Set $\Omega=\{ 1,2,\cdots ,t \}$.
  For $g\in G$,
  let 
   \begin{equation}\label{eq}
      (a_i, a_i \pi(g), \ldots, a_i \pi(g)^{f_i - 1}),\quad i=1,\dots,z(g),
   \end{equation}
  be the cycles of the permutation $\pi(g)\in \mathsf{Sym}(\Omega)$ (the symmetric group on $\Omega$)
   defined by $x_j g = h_j(g)x_{j\pi(g)}$, where $h_j(g)\in H$.
  Then, by \cite[Theorem 25.3]{huppert98},
	\[
	\chi(g)=\prod_{i=1}^{z(g)} \psi(h_{a_i}(g)h_{a_i\pi(g)}(g)\cdots h_{a_i\pi(g)^{f_i-1}}(g)).
	\]
		Consequently,
	$$\mathbb{Q}(\theta)=\mathbb{Q}(\chi_N)\subseteq \mathbb{Q}(\chi)\subseteq \mathbb{Q}(\psi)=\mathbb{Q}(\alpha)\subseteq \mathbb{Q}(\theta),$$
  so $\mathbb{Q}(\chi)=\mathbb{Q}(\theta)$.
    
  Now, let $g\in \mathrm{C}_{G}(N)$.
	Since $\mathrm{C}_{G}(N)\unlhd G$, 
  we have $x_jg=h_j(g)x_j$, hence $h_j(g)=x_jg x_j^{-1}\in \mathrm{C}_{H}(N)$.
  Therefore, 
	\[
	 \chi(g)=\prod_{j=1}^{t} \psi(h_j(g))=\prod_{j=1}^{t} \psi(x_jg x_j^{-1})=\prod_{j=1}^{t} \psi(1)=\chi(1)
	\]
    where the third equality holds as $x_jg x_j^{-1}\in \mathrm{C}_{G}(N)\leq \mathrm{C}_G(S)\leq \ker(\psi)$.
    Thus $\mathrm{C}_{G}(N)\leq \ker(\chi)$.

Finally, suppose that the cyclic group $\langle \sigma\rangle$ acts on $G$.
  We retain all notation from above.
  Then $\psi^\sigma\in \mathrm{Irr}(H^\sigma)$, and $\{ x_1^{\sigma},\cdots , x_t^{\sigma} \}$ is a transversal of $H^{\sigma}$ in $G$.
  Note that $x_j^{\sigma} g^\sigma=h_j(g)^\sigma x_{j\pi(g)}^{\sigma}$ with $h_j(g)\in H$,
  and so $g^{\sigma}$ induces the same permutation on $\Omega$ as $g$.
  By abuse of notation, we write $\pi(g^{\sigma})=\pi(g)\in \mathsf{Sym}(\Omega)$.
  In particular, $\pi(g^{\sigma})$ admits the cycle decomposition (\ref{eq}).
  By \cite[Theorem 25.3]{huppert98}, we have
  \[
      \begin{aligned}
          (\psi^\sigma)^{\otimes G}(g^{\sigma})&=\prod_{i=1}^{z(g)} \psi^{\sigma}(h_{a_i}(g)^{\sigma}h_{a_i\pi(g)}(g)^{\sigma}\cdots h_{a_i\pi(g)^{f_i-1}}(g)^{\sigma})\\
         & =\prod_{i=1}^{z(g)} \psi(h_{a_i}(g)h_{a_i\pi(g)}(g)\cdots h_{a_i\pi(g)^{f_i-1}}(g))=\chi(g)=\chi^\sigma(g^{\sigma}).
          \end{aligned}
  \]
Consequently, $\chi^\sigma=(\psi^\sigma)^{\otimes G}\in \mathrm{Irr}(G)$, as desired.
\end{proof}

We close this subsection with a lemma that collects two useful facts about real-valued characters relative to a normal subgroup.

\begin{lem}\label{lem: real chars}
Let $N$ be a normal subgroup of a finite group $G$ such that $[G:N]$ is odd.  
Then the following statements hold.
\begin{enumerate}[\rm (1)]
    \item 
    If $\chi \in \mathrm{Irr}(G)$ is real-valued, then every irreducible constituent of $\chi_N$ is real-valued.
    In particular, $\chi_N$ is a sum of a $G$-orbit of real-valued irreducible characters of $N$.
    \item 
    If $\theta \in \mathrm{Irr}(N)$ is real-valued, then there exists a unique real-valued character $\chi \in \mathrm{Irr}(G|\theta)$.
\end{enumerate}
\end{lem}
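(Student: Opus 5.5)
Both parts come from Clifford theory, using two facts: complex conjugation commutes with restriction to $N$, and $G/N$ has odd order.

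\textbf{Part (1).} Let $\chi\in\mathrm{Irr}(G)$ be real-valued and let $\theta$ be an irreducible constituent of $\chi_N$. By Clifford's theorem, $\chi_N=e\sum_{i}\theta^{g_i}$, a sum over the $G$-orbit of $\theta$.
\begin{itemize}
\item Since $\chi=\overline{\chi}$, we get $\chi_N=\overline{\chi_N}$. Hence $\overline{\theta}$ is also a constituent, so $\overline{\theta}=\theta^g$ for some $g\in G$.
\item Then $\theta^{g^2}=\overline{\theta^g}=\overline{\overline{\theta}}=\theta$, so $g^2\in \mathrm{I}_G(\theta)$.
\item $\mathrm{I}_G(\theta)$ contains $N$, so the image of $g$ in $G/N$ has odd order $m$, with $gN\in\langle g^2N\rangle$. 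Therefore $g\in \mathrm{I}_G(\theta)$.
\item Hence $\overline{\theta}=\theta$.
\end{itemize}
The same holds for every $G$-conjugate of $\theta$, which gives the second sentence of (1).

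\textbf{Part (2): existence.} Let $\theta\in\mathrm{Irr}(N)$ be real-valued and set $I=\mathrm{I}_G(\theta)$. By the Clifford correspondence, induction gives a bijection $\mathrm{Irr}(I|\theta)\to\mathrm{Irr}(G|\theta)$. This bijection commutes with complex conjugation, because $\overline{\theta}=\theta$ means conjugation preserves $\mathrm{Irr}(I|\theta)$. So it suffices to show that $\mathrm{Irr}(I|\theta)$ contains exactly one real-valued character. We may therefore assume $\theta$ is $G$-invariant.
\begin{itemize}
\item Since $[G:N]$ is odd, $\mathrm{Irr}(G|\theta)=\{\chi_1,\dots,\chi_k\}$ and $\theta^G=\sum_i e_i\chi_i$ with $e_i=[\chi_i,\theta^G]=[(\chi_i)_N,\theta]$.
\item $\theta^G$ is real-valued, since it is induced from a real-valued character. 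So conjugation permutes the $\chi_i$ and preserves the multiplicities $e_i$.
\item We have $\theta^G(1)=[G:N]\theta(1)$ and $\chi_i(1)=e_i\theta(1)$. Comparing degrees gives $\sum_i e_i^2=[G:N]$, which is odd.
\item Non-real characters occur in pairs $\{\chi_i,\overline{\chi_i}\}$ with equal $e_i$, so they contribute an even amount to $\sum e_i^2$. Hence some real $\chi_i$ has $e_i$ odd, and a real-valued member of $\mathrm{Irr}(G|\theta)$ exists.
\end{itemize}

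\textbf{Part (2): uniqueness.} This is the main obstacle, since the parity count only shows the number of real-valued members with odd $e_i$ is odd. Uniqueness needs a genuine structural argument. I would argue by induction on $[G:N]$ and use the result on odd-order extensions: there is a unique extension of an invariant character with determinantal order coprime to $[G:N]$, or use Gallagher-type reasoning via a minimal normal subgroup $M/N$ of $G/N$.

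Take $N\le M\lhd G$ with $M/N$ a chief factor. Since $G/N$ is odd, hence solvable, $M/N$ is an elementary abelian $q$-group with $q$ odd.
\begin{itemize}
\item By induction, applied through the Clifford correspondence for $M$, there is a unique real $\eta\in\mathrm{Irr}(M|\theta)$.
\item Every real $\chi\in\mathrm{Irr}(G|\theta)$ lies over a real constituent of $\chi_M$, by part (1) applied to $M\lhd G$.
\item Such a constituent lies in $\mathrm{Irr}(M|\theta)$, so it equals $\eta$. In particular $\eta$ is $G$-invariant.
\item Induction on $[G:M]<[G:N]$ then gives uniqueness in $\mathrm{Irr}(G|\eta)$, provided $M>N$ is proper.
\end{itemize}

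It therefore remains to handle the base case: $G/N$ is an elementary abelian $q$-group of odd order and $\theta$ is $G$-invariant.
\begin{itemize}
\item In the base case $\theta$ either extends or is fully ramified.
\item If $\theta$ extends, the extensions are $\hat\theta\lambda$ for $\lambda\in\mathrm{Irr}(G/N)$, and the real-valued extension is the canonical one. It is unique: if $\hat\theta$ and $\hat\theta\lambda$ are both real, then $\hat\theta\lambda=\hat\theta\overline\lambda$, so by Gallagher $\lambda=\overline\lambda$, hence $\lambda^2=1$. Since $\lambda$ has odd order, $\lambda=1$.
\item If $\theta$ is fully ramified, then $\mathrm{Irr}(G|\theta)$ is a single character and uniqueness is trivial.
\item In the mixed case, reduce via the above chain argument.
\end{itemize}
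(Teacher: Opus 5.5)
Your proof is correct. For part (2) it takes a genuinely different route from the paper, which gives no argument there and simply cites Richards [Corollary 1].

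\textbf{Part (1).} The two arguments are close. The paper observes that complex conjugation permutes the $G$-orbit $\{\theta_1,\dots,\theta_t\}$, a set of odd size $t$, by a permutation of order at most $2$. Hence it fixes some $\theta_j$, and realness is then transported along $G$-conjugation. You instead show directly that $\overline{\theta}=\theta^g$ forces $g^2\in\mathrm{I}_G(\theta)$, and then $g\in\mathrm{I}_G(\theta)$ because $gN$ has odd order. Both are equally short.

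\textbf{Part (2).} You give a self-contained Clifford-theoretic proof in three steps:
\begin{enumerate}
\item reduce to $G$-invariant $\theta$, using that the Clifford correspondence commutes with complex conjugation when $\overline{\theta}=\theta$;
\item obtain existence from the parity of $\sum_i e_i^2=[G:N]$;
\item obtain uniqueness by induction through a chief factor $M/N$, using part (1) to push every real $\chi\in\mathrm{Irr}(G|\theta)$ over the unique real $\eta\in\mathrm{Irr}(M|\theta)$.
\end{enumerate}
This makes the lemma independent of the reference. The price is Feit--Thompson (to make $M/N$ abelian) and Gallagher's theorem. Since the paper already invokes Feit--Thompson in Lemma~\ref{lem: basic}(2), that costs nothing here.

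Feit--Thompson could even be avoided by counting. Every real class of $G$ lies in $N$, because $G/N$ has odd order. The orbit count $m=n$ from the proof of Lemma~\ref{lem: all real in G/N} then shows that $G$ has exactly as many real irreducible characters as there are $G$-orbits of real characters of $N$. Uniqueness then follows from existence and part (1).

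\textbf{Base case.} You state it more loosely than necessary. When $M=G$, the group $G/N$ is a minimal normal subgroup of itself and is abelian, so $|G/N|=q$ is prime. Then $\theta$ extends because $G/N$ is cyclic, and your Gallagher argument finishes: $\hat\theta\lambda=\hat\theta\overline{\lambda}$ forces $\lambda^2=1$, hence $\lambda=1$. The fully ramified and mixed subcases never arise, and the remark about the canonical extension is not needed.
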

\begin{proof}
  Part (2) is \cite[Corollary 1]{richards85},
  so we only prove part (1) below.

   For part (1), as $[G:N]$ is odd,
   we have $\chi_N=e(\theta_1+\cdots +\theta_t)$ where $\theta_i\in \mathrm{Irr}(N)$ and $t=[G:\mathrm{I}_{G}(\theta_1)]$ is odd.
  Since $\chi$ is real-valued, complex conjugation permutes the set $\{\theta_1, \dots, \theta_t\}$. 
  This permutation has order at most $2$, and since $t$ is odd, it must admit at least one fixed point. 
That is, there exists some $1\leq j\leq t$ such that $\overline{\theta_j} = \theta_j$, i.e. $\theta_j$ is real-valued.
As all $\theta_i$ are $G$-conjugate to $\theta_j$, and the property of being real-valued is preserved under $G$-conjugation, every irreducible constituent of $\chi_N$ is real-valued.
\end{proof}

\subsection{Blocks}

We next recall some basic notions and facts of block theory for finite groups, and refer to \cite{nagao89, navarro98} for further details.

Let $p$ be a prime and let $G$ be a finite group. 
Let $R$ denote the ring of algebraic integers in $\mathbb{C}$, and fix a maximal ideal $M$ of $R$ containing $p$.
Let $\mathcal{O}$ be the localization of $R$ at $M$, so that
$$\mathcal{O}=\{ a/b: a\in R,\; b\in R\smallsetminus M \}.$$
Then $\mathcal{O}$ is a local ring with a unique maximal ideal $\mathcal{M}=\{ a/b:a\in M,\; b\in R\smallsetminus M \}$,
and its residue field $k := \mathcal{O}/\mathcal{M}$ is an algebraic closure of the prime field $\mathbb{F}_p$ of $p$ elements.
The canonical quotient map $* \colon \mathcal{O} \to k = \mathcal{O}/\mathcal{M}$ extends naturally to an epimorphism $* \colon \mathcal{O}[G] \to k[G]$ of group algebras, providing the standard bridge between representations in characteristics 0 and $p$.

The group algebra $\mathcal{O}[G]$ decomposes as a direct sum of indecomposable two-sided ideals:
\[
\mathcal{O}[G] = B_1 \oplus B_2 \oplus \cdots \oplus B_t.
\]
These summands $B_i$ are called the \emph{$p$-blocks} of $G$.
Under the map $*$, each block $B_i$ maps onto an indecomposable two-sided ideal of $k[G]$, and this induces a bijection between the $p$-blocks of $\mathcal{O}[G]$ and those of $k[G]$. When no confusion arises, we identify each $p$-block with its image in $k[G]$.

Let $\mathrm{Irr}(G)$ and $\mathrm{IBr}(G)$ denote the sets of irreducible characters and irreducible $p$-Brauer characters of $G$, respectively.
Each $p$-block $B$ of $G$ uniquely determines the subsets $\mathrm{Irr}(B) \subseteq \mathrm{Irr}(G)$ and $\mathrm{IBr}(B) \subseteq \mathrm{IBr}(G)$ of ordinary and $p$-Brauer characters belonging to $B$.
It is well known that
the two sets are linked via the map $*$ described above.
Note that $\mathrm{Irr}(G)$ (resp. $\mathrm{IBr}(G)$) is the disjoint union of $\mathrm{Irr}(B)$ (resp. $\mathrm{IBr}(B)$) as $B$ runs over all $p$-blocks of $G$.
In particular, two $p$-blocks of $G$ coincide if and only if they 
share a common irreducible character (or irreducible $p$-Brauer character).
The unique $p$-block containing the principal character $1_G$ is called the \emph{principal $p$-block} of $G$, and is denoted by $B_0(G)$.

Let $B$ be a $p$-block of $G$, and let $x^G$ be a $G$-conjugacy class. We say that $x^G$ is a \emph{defect class} of $B$ if
\[
\frac{1}{|G|}\sum_{\chi\in \mathrm{Irr}(B)} \chi(1)\chi(x^{-1}) \notin \mathcal{M},\qquad \frac{|x^G|\psi(x)}{\psi(1)} \notin \mathcal{M}
\]
for some $\psi\in \mathrm{Irr}(B)$. The Sylow $p$-subgroups of $\mathrm{C}_{G}(x)$, where $x^G$ is a defect class of $B$, are called the \emph{defect groups} of $B$. In particular, the defect groups of the principal $p$-block $B_0(G)$ are the Sylow $p$-subgroups of $G$.

Now let $N$ be a normal subgroup of $G$.
Given a $p$-block $b$ of $N$, we say that a $p$-block $B$ of $G$ \emph{covers} $b$ if there exists a character $\chi \in \mathrm{Irr}(B)$ 
such that $\chi_N$ has an irreducible constituent in $\mathrm{Irr}(b)$ (see \cite[Theorem 9.2]{navarro98}).
Next, suppose that $b$ is a $p$-block of the quotient group $G/N$.
We say a $p$-block $B$ of $G$ \emph{dominates} $b$ provided $\mathrm{Irr}(b) \subseteq \mathrm{Irr}(B)$ (see \cite[Ch.5, Lemma 8.6]{nagao89}), where characters of $G/N$ are identified with their inflations to $G$.

We first present several basic facts on principal $p$-blocks that will be used frequently without mention.

\begin{lem}\label{lem: basics on principal blocks}
  Suppose that $N$ is a normal subgroup of a finite group $G$. 
  Then the following hold.  
  \begin{enumerate}[\rm (1)]
      \item $\mathrm{Irr}(B_0(G/N))\subseteq \mathrm{Irr}(B_0(G))$.
  \item For $\chi \in \mathrm{Irr}(B_0(G))$, every irreducible constituent of $\chi_N$ lies in $\mathrm{Irr}(B_0(N))$.
    \item For $\theta\in \mathrm{Irr}(B_0(N))$, there exists some $\chi \in \mathrm{Irr}(B_0(G))$ lying over $\theta$. 
  \item If $B_0(G)$ is the unique block of $G$ covering $B_0(N)$, then $\mathrm{Irr}(G|\theta) \subseteq \mathrm{Irr}(B_0(G))$ for every $\theta \in \mathrm{Irr}(B_0(N))$.  
    This happens, for instance, if $G/N$ is a $p$-group, or if $\mathrm{C}_G(P)\leq N$ for some $P\in \mathrm{Syl}_{p}(N)$.
  \end{enumerate}
\end{lem}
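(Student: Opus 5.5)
These four statements are standard facts from block theory. For each I plan to cite the corresponding textbook result from \cite{navarro98}, adding a short argument where needed.

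For (1), I use that every block of $G/N$ is dominated by a unique block of $G$ (\cite[Ch.5, Lemma 8.6]{nagao89}). The principal character $1_{G/N}$ inflates to $1_G$. Hence the block of $G$ dominating $B_0(G/N)$ contains $1_G$, so it is $B_0(G)$, and therefore $\mathrm{Irr}(B_0(G/N))\subseteq\mathrm{Irr}(B_0(G))$.

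For (2) and (3), I invoke the covering theory. By \cite[Theorem 9.2]{navarro98} and its consequences, if $B$ covers $b$ then every irreducible constituent of $\chi_N$, for $\chi\in\mathrm{Irr}(B)$, lies in a $G$-conjugate of $b$. Conversely, every $\theta\in\mathrm{Irr}(b)$ lies under some $\chi\in\mathrm{Irr}(B)$ (\cite[Theorem 9.4]{navarro98}). The restriction of $1_G$ to $N$ is $1_N$, so $B_0(G)$ covers $B_0(N)$. Since $B_0(N)$ is $G$-invariant (it contains the $G$-invariant character $1_N$), the blocks of $N$ covered by $B_0(G)$ form a single $G$-orbit, namely $\{B_0(N)\}$. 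This gives (2). For (3), I apply the converse statement with $B=B_0(G)$ and $b=B_0(N)$.

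For (4), take $\chi\in\mathrm{Irr}(G|\theta)$ with $\theta\in\mathrm{Irr}(B_0(N))$. The block of $\chi$ covers $B_0(N)$, so by hypothesis it equals $B_0(G)$. For the two sufficient conditions I cite known results:
\begin{itemize}
\item If $G/N$ is a $p$-group, every block of $N$ is covered by a unique block of $G$ (\cite[Corollary 9.6]{navarro98}).
\item If $\mathrm{C}_G(P)\leq N$ for $P\in\mathrm{Syl}_p(N)$, then $B_0(G)$ is the only block covering $B_0(N)$ (\cite[Lemma 9.20]{navarro98}). This is proved by a Brauer third main theorem–type argument: a covering block $B$ has a defect group $D$ with $D\cap N=P$ up to conjugation, which forces $\mathrm{C}_G(D)\leq \mathrm{C}_G(P)\leq N$, and Brauer's third main theorem then gives $B=B_0(G)$.
\end{itemize}

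The main obstacle is this last criterion, if one wants to prove it rather than cite it. I would cite \cite[Lemma 9.20]{navarro98} (equivalently \cite[Theorem 9.19]{navarro98}) directly.
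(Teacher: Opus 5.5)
Your proposal is correct and is essentially the paper's proof: the paper also assembles standard facts from \cite{navarro98}. It uses domination for (1), \cite[Theorem 9.2]{navarro98} for (2), and \cite[Theorems 9.2, 9.4]{navarro98} for (3); for (4) it cites \cite[Theorem 9.4, Corollary 9.6, Theorem 6.7]{navarro98} together with \cite[Lemma 3.1]{navarro12}, the latter handling the criterion $\mathrm{C}_G(P)\leq N$. One caveat: your sketch of that last criterion does not by itself justify invoking Brauer's third main theorem, since the real content is that any block covering $B_0(N)$ equals $B_0(N)^G$ when $\mathrm{C}_G(P)\leq N$. Because you cite the result, this is harmless, and once the covering block is known to be unique it must be $B_0(G)$, simply because $B_0(G)$ covers $B_0(N)$, as you observed.
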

\begin{proof}
  Part (1) follows by the fact that $B_0(G)$ dominates $B_0(G/N)$, see, for instance, \cite[Pages 198, 199]{navarro98}. 
  Part (2) follows from \cite[Theorem 9.2]{navarro98}.
  Part (3) follows from \cite[Theorems 9.2, 9.4]{navarro98}.
  Part (4) follows from \cite[Theorem 9.4, Corollary 9.6, Theorem 6.7]{navarro98} and \cite[Lemma 3.1]{navarro12}.
\end{proof}

Now, we recall some facts on central product of finite groups.
Given distinct subgroups $H_i \leq G$ for $1 \leq i \leq t$,
suppose that $H_i$ and $H_j$ centralize each other whenever $i \neq  j$. 
If $G=\prod_{i=1}^{t} H_i$, we say that $G$ is
the \emph{central product} of its subgroups $H_i$,
and write $G=H_1\circ H_2\circ \cdots \circ H_t$.
Note that in this case, $H_i\unlhd G$
for all $i$ and $H_i \cap H_j \leq \mathrm{Z}(G)$ if $i \neq j$.

For $B$ a $p$-block of $G$, $Z\unlhd G$ and $\lambda\in \mathrm{Irr}(Z)$, we
set $\mathrm{Irr}(B|\lambda):=\mathrm{Irr}(B)\cap \mathrm{Irr}(G|\lambda)$.

\begin{lem}\label{lem: cent prod of chars and blocks}
  Let $G=H_1\circ H_2\circ \cdots \circ H_t$ be the central product of its subgroups $H_i$, and set $Z = \bigcap_{i=1}^{t} H_i$.
  Then $\mathrm{Irr}(G)$ is a disjoint union of the subsets $\mathrm{Irr}(G|\lambda)$
  as $\lambda$ runs through $\mathrm{Irr}(Z)$
  and, 
  for each $\lambda\in \mathrm{Irr}(Z)$, there exists a natural bijection
   \[
    \Lambda_\lambda:~\mathrm{Irr}(H_1|\lambda)\times \mathrm{Irr}(H_2|\lambda)\times \cdots \times\mathrm{Irr}(H_t|\lambda) \rightarrow \mathrm{Irr}(G|\lambda)
    \]
 satisfying 
\[
\Lambda_{\lambda}(\alpha_1,\alpha_2,\dots,\alpha_t)(h_1h_2\cdots h_t)=\alpha_1(h_1)\alpha_2(h_2)\cdots\alpha_t(h_t)
\]
for all $\alpha_i\in \mathrm{Irr}(H_i|\lambda)$ and $h_i\in H_i$.
  Suppose further that $Z$ is a $p$-group for a prime $p$.
   Then for every $\lambda\in\mathrm{Irr}(Z)$, the map $\Lambda_\lambda$ restricts to a natural bijection from
  $\mathrm{Irr}(B_0(H_1)|\lambda)\times \mathrm{Irr}(B_0(H_2)|\lambda) \times \cdots \times\mathrm{Irr}(B_0(H_t)|\lambda)$ 
  onto $\mathrm{Irr}(B_0(G)|\lambda)$,
  where $B_0(H_i)$ and $B_0(G)$ denote the principal $p$-blocks of the corresponding groups respectively.
  \end{lem}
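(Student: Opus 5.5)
The first part is standard Clifford theory for central products, and I will prove it that way. Let $\hat G=H_1\times\cdots\times H_t$ and let $\mu:\hat G\to G$ be the multiplication map $(h_1,\dots,h_t)\mapsto h_1\cdots h_t$. It is a surjective homomorphism because the $H_i$ centralize each other pairwise. Its kernel $K$ consists of the tuples with $h_1\cdots h_t=1$.

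Each $H_i\cap H_j$ is central, so I will check that $K$ lies in $Z(H_1)\times\cdots\times Z(H_t)$. The key point is that $h_i$ commutes with every $H_j$ for $j\neq i$. From $h_1\cdots h_t=1$ we get $h_i=\prod_{j\ne i}h_j^{-1}$, and the right-hand side centralizes $H_i$. Hence $h_i\in Z(H_i)$, and the same relation places it in $\prod_{j\ne i}H_j$.

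The precise bookkeeping with $Z=\bigcap H_i$ is where I intend the standard convention: $H_i\cap H_j=Z$ for $i\ne j$, with $Z$ central. In that case $K=\{(z_1,\dots,z_t)\in Z^t: z_1\cdots z_t=1\}$.

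Irreducible characters of $G$ are then exactly the characters $\alpha_1\times\cdots\times\alpha_t$ of $\hat G$ with $K$ in their kernel. By Schur's lemma each $\alpha_i$ restricts to $Z$ as $\alpha_i(1)\lambda_i$ for some linear $\lambda_i\in\mathrm{Irr}(Z)$. The condition that $K$ lies in the kernel is that $\prod\lambda_i(z_i)=1$ whenever $\prod z_i=1$. This forces all the $\lambda_i$ to equal a single $\lambda$.

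This gives the disjoint union over $\lambda\in\mathrm{Irr}(Z)$, since $\chi_Z=\chi(1)\lambda$ determines $\lambda$. It also gives the bijection $\Lambda_\lambda$ with the stated formula, which is well defined because the value is independent of the factorization: factorizations differ by an element of $K$.

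For blocks I use the fact that $Z$ is a central $p$-group contained in the kernel of $\mu$-related data. The $p$-blocks of $\hat G$ are products $b_1\otimes\cdots\otimes b_t$ of blocks of the $H_i$. Its principal block is $B_0(H_1)\otimes\cdots\otimes B_0(H_t)$, so $\mathrm{Irr}(B_0(\hat G))$ is the set of products of principal-block characters.

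Since $K$ is a normal $p$-subgroup of $\hat G$ (it is a subgroup of $Z^t$), the principal block of $G=\hat G/K$ has a simple description. By the domination theory cited in Lemma \ref{lem: basics on principal blocks}, with $K$ a $p$-group every block of $\hat G/K$ is dominated by a unique block. Moreover $\mathrm{Irr}(B_0(\hat G/K))=\mathrm{Irr}(B_0(\hat G))\cap\mathrm{Irr}(\hat G/K)$; this is the statement that for a normal $p$-subgroup the principal block of the quotient consists of exactly the inflated characters of the principal block (Nagao–Tsushima, Ch.~5, Theorem 8.11).

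Intersecting with $\mathrm{Irr}(G|\lambda)$ and using the first part, $\Lambda_\lambda$ maps $\prod_i\mathrm{Irr}(B_0(H_i)|\lambda)$ bijectively onto $\mathrm{Irr}(B_0(G)|\lambda)$.

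The main obstacle is justifying the block step. One part is the equality $\mathrm{Irr}(B_0(\hat G/K))=\mathrm{Irr}(B_0(\hat G))\cap\mathrm{Irr}(\hat G/K)$. The other is the tensor description of the blocks of a direct product, for which I will cite Nagao–Tsushima. Beyond that I must be careful that the identification of $Z$ inside each $H_i$ is handled consistently.
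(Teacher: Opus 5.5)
Your route is to write $G=\hat G/K$ with $\hat G=H_1\times\cdots\times H_t$, identify $\mathrm{Irr}(G)$ with the characters $\alpha_1\times\cdots\times\alpha_t$ having $K$ in their kernel, and then deflate $B_0(\hat G)=B_0(H_1)\otimes\cdots\otimes B_0(H_t)$ through $K$. This is the standard argument behind the two results the paper simply cites in place of a proof. However, the step where you identify $K$ is false as written, and the block fact you quote needs an extra hypothesis.

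\textbf{The identification of $K$.} The convention $H_i\cap H_j=Z$ for $i\neq j$ does not give $K=\{(z_1,\dots,z_t)\in Z^t: z_1\cdots z_t=1\}$ when $t\geq 3$. Take $G=\langle a\rangle\times\langle b\rangle\cong \mathsf{C}_2\times\mathsf{C}_2$ with $H_1=\langle a\rangle$, $H_2=\langle b\rangle$, $H_3=\langle ab\rangle$. All pairwise intersections equal $Z=1$, yet $(a,b,ab)\in K$. Correspondingly $|\mathrm{Irr}(H_1)\times\mathrm{Irr}(H_2)\times\mathrm{Irr}(H_3)|=8\neq 4=|\mathrm{Irr}(G)|$. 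The formula for $\Lambda_\lambda$ is not even well defined: evaluate it at $1=a\cdot b\cdot ab$.

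What your computation actually proves is $h_i\in \mathrm{Z}(H_i)\cap\prod_{j\neq i}H_j$. So the hypothesis you need, which is equivalent to $K\leq Z^t$, is $H_i\cap\prod_{j\neq i}H_j=Z$ for every $i$. The same example shows that the statement itself needs this extra hypothesis under the paper's definition of central product, which only asks for pairwise commuting factors. This is harmless for the paper, since every application is to a direct product. Still, your proof should impose the correct condition rather than the pairwise one.

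\textbf{The block-theoretic fact.} As you state it, for an arbitrary normal $p$-subgroup, it is false. Take $p=2$ and $K=\mathrm{O}_2(\mathsf{S}_4)$. The group $\mathsf{S}_4$ has a single $2$-block, so $\mathrm{Irr}(B_0(\mathsf{S}_4))\cap\mathrm{Irr}(\mathsf{S}_3)=\mathrm{Irr}(\mathsf{S}_3)$. But the degree-$2$ character of $\mathsf{S}_3$ forms a block of defect zero, so it is not in $B_0(\mathsf{S}_3)$.

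What you need is the central case: for a central $p$-subgroup $K$, each block of $\hat G$ dominates exactly one block of $\hat G/K$ (Theorem 9.10 of Navarro's \emph{Characters and Blocks of Finite Groups}). This does apply here. For $j\neq i$, $Z\leq H_j$ is centralized by $H_i$, so $Z\leq \mathrm{Z}(H_i)$ and hence $K\leq Z^t\leq \mathrm{Z}(\hat G)$.

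With these two corrections, the rest of your argument goes through: forcing all $\lambda_i$ to be equal, well-definedness of $\Lambda_\lambda$, and intersecting with $\mathrm{Irr}(G|\lambda)$.
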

\begin{proof}
  This is a combination of \cite[Theorem 10.7]{navarro18} and \cite[Lemma 4.1]{malle23}.
\end{proof}

 In the following, 
   we denote $\Lambda_{\lambda}(\alpha_1,\alpha_2,\dots,\alpha_t)$ in the above lemma 
   simply by $\alpha_1\circ \alpha_2\circ \cdots\circ \alpha_t$
   for every $\lambda\in \mathrm{Irr}(Z)$ and $\alpha_i\in \mathrm{Irr}(H_i|\lambda)$.

	We will also employ the Alperin--Dade's theory of isomorphic blocks.

	\begin{lem}\label{lem: isoblock} 
    Let $N$ be a normal subgroup of a finite group $G$, and let $p$ be a prime.
    Assume that $G/N$ has order coprime to $p$ and that $G=N \mathrm{C}_{G}(P)$ for some $P\in \mathrm{Syl}_{p}(N)$.
    Then restriction of characters defines a bijection between $\mathrm{Irr}(B_{0}(G))$ and $\mathrm{Irr}(B_{0}(N))$.
  \end{lem}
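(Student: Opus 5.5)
This is the classical Alperin--Dade isomorphism theorem for principal blocks. The plan is to reduce it to the group-theoretic situation treated by Alperin (for $G/N$ solvable) and Dade (in general), and to check the hypotheses. Fix $P\in\mathrm{Syl}_p(N)$ with $G=N\mathrm{C}_G(P)$. Since $[G:N]$ is prime to $p$, $P$ is also a Sylow $p$-subgroup of $G$.

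\textbf{Step 1: restriction lands in $\mathrm{Irr}(B_0(N))$.} For $\chi\in\mathrm{Irr}(B_0(G))$, Lemma \ref{lem: basics on principal blocks}(2) puts every constituent of $\chi_N$ in $\mathrm{Irr}(B_0(N))$. Next, $B_0(N)$ is $G$-stable. By Brauer's third main theorem, together with the Frattini-type factorization $G=N\mathrm{N}_G(P)$, every character of $B_0(N)$ is $\mathrm{C}_G(P)$-invariant. Indeed, $\mathrm{C}_G(P)$ acts on $N$ fixing $P$ pointwise, and there is the standard fact (Glauberman/Dade) that an automorphism of $p'$-order centralizing a Sylow $p$-subgroup fixes every irreducible character in the principal block. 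Since $G=N\mathrm{C}_G(P)$, each $\theta\in\mathrm{Irr}(B_0(N))$ is therefore $G$-invariant.

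\textbf{Step 2: extension and uniqueness.} Because $G/N$ is a $p'$-group and $\theta$ is $G$-invariant, the next task is to show that $\theta$ has a canonical extension lying in $B_0(G)$, and that $\mathrm{Irr}(G|\theta)\cap\mathrm{Irr}(B_0(G))$ consists of exactly that one extension. Covering theory helps here. The blocks of $G$ covering $B_0(N)$ correspond to blocks of $N\mathrm{C}_G(P)/\ldots$. More concretely, $B_0(G)$ has defect group $P\le N$, and the number of blocks covering $B_0(N)$ is governed by $\mathrm{C}_G(P)N/N=G/N$. By \cite[Theorem 9.4, Corollary 9.6]{navarro98}, the characters of $G$ over $\theta$ are $\theta\lambda$ for $\lambda$ running through the projective characters of $G/N$, and they are distributed among distinct blocks. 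Only one of them shares its central character with $1_G$ on the $p$-regular classes meeting $\mathrm{C}_G(P)$, and that one is the member of $B_0(G)$. Injectivity of restriction then follows, and surjectivity follows from Lemma \ref{lem: basics on principal blocks}(3).

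\textbf{Main obstacle.} The hard step is the existence of a $G$-invariant extension in $B_0(G)$ for every $\theta$ when $G/N$ is non-solvable. This is exactly Dade's contribution, proved via the Glauberman correspondence and a careful analysis of Brauer pairs. Rather than reprove it, I would cite Alperin's theorem (Isomorphic blocks, J. Algebra 1976) and Dade's extension to arbitrary $p'$-quotients (Remarks on isomorphic blocks, J. Algebra 1977). Their combined statement is precisely that restriction induces a bijection $\mathrm{Irr}(B_0(G))\to\mathrm{Irr}(B_0(N))$ under the hypothesis $G=N\mathrm{C}_G(P)$ with $[G:N]$ prime to $p$. The proof in the paper will thus be a citation, and the check above confirms that the hypotheses match.
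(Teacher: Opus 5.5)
Your proposal matches the paper: the paper's proof of this lemma is only the citation of Alperin (for $G/N$ solvable) and Dade (general case), which is where you end up. Your Steps 1--2 are not an independent verification, though. The claim that a $p'$-automorphism centralizing $P$ fixes every character of $B_0(N)$, and the claim that $\mathrm{Irr}(G|\theta)\cap\mathrm{Irr}(B_0(G))$ is a single extension of $\theta$, are themselves the content of the Alperin--Dade theorem, so the citation, not the sketch, carries the proof.
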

  \begin{proof}
    This is a well-known result: the case where $G/N$ is solvable was proved in \cite{alperin76}, and the general case was established in \cite{dade77}.
\end{proof}

  We end this subsection with Harris' classification of the finite groups with a unique $p$-block.

\begin{thm}[\mbox{\cite[Theorem 1]{harris85}}]\label{thm: ub}
  Given a prime $p$,
  a finite group $G$ has a unique $p$-block 
  if and only if one of the following holds.
  \begin{enumerate}[\rm (1)]
       \item $G$ is $p$-constrained with $\mathrm{O}_{p'}(G)=1$. 
    \item $p=2$, all components of $G$ are of type $M_{22}$ or $M_{24}$, and $\mathrm{O}_{2'}(G)=1$.
  \end{enumerate}
\end{thm}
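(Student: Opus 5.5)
Since Theorem \ref{thm: ub} is Harris' result and is quoted from \cite{harris85}, I would reconstruct it via the generalized Fitting subgroup $F^*(G)=\mathrm{O}_p(G)E(G)$, the covering results of Lemma \ref{lem: basics on principal blocks}, and the classification of quasi-simple groups having a unique $p$-block.

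\emph{Necessity.} Suppose $G$ has a unique $p$-block, so $\mathrm{Irr}(G)=\mathrm{Irr}(B_0(G))$.
\begin{enumerate}[\rm (a)]
\item First I show $\mathrm{O}_{p'}(G)=1$. For $N=\mathrm{O}_{p'}(G)$, the principal block $B_0(N)$ is $\{1_N\}$. By Lemma \ref{lem: basics on principal blocks}(2), every $\chi\in\mathrm{Irr}(G)$ lies over $1_N$, so $N\le\ker\chi$ for all $\chi$. Hence $N=1$.
\item Next, for any $K\unlhd G$, the blocks of $K$ covered by the unique block of $G$ form a single $G$-orbit. Every block of $K$ is covered by some block of $G$, and $B_0(K)$ is $G$-invariant. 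Therefore $K$ itself has a unique $p$-block.
\item Apply (b) to $K=E(G)$. This is a central product of the components $L$, with $Z(E(G))\le \mathrm{O}_p(G)$ a $p$-group because $\mathrm{O}_{p'}(G)=1$. By Lemma \ref{lem: cent prod of chars and blocks}, each component $L$ must have all its characters in $B_0(L)$.
\item I then invoke the classification of quasi-simple groups with a unique $p$-block. By CFSG and the known existence results for blocks of defect zero (Michler, Willems, Granville--Ono, and the analysis of covering groups), this happens only for $p=2$ and $L/Z(L)\in\{M_{22},M_{24}\}$.
\item If $E(G)=1$, then $F^*(G)=\mathrm{O}_p(G)$ and $C_G(\mathrm{O}_p(G))\le \mathrm{O}_p(G)$. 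Since $\mathrm{O}_{p'}(G)=1$, this is exactly $p$-constraint, which gives (1). Otherwise we are in (2).
\end{enumerate}

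\emph{Sufficiency.}
\begin{enumerate}[\rm (a)]
\item In case (1), put $N=\mathrm{O}_p(G)$. Then $N$ is a $p$-group, so it has only the principal block, and $C_G(N)\le N$ with $N$ its own Sylow $p$-subgroup. Lemma \ref{lem: basics on principal blocks}(4) shows every character of $G$ lies in $B_0(G)$.
\item In case (2), put $N=F^*(G)=\mathrm{O}_2(G)E(G)$. Its components are of type $M_{22}$ or $M_{24}$, and their covers with $2$-power center have a unique $2$-block. The $2$-group $\mathrm{O}_2(G)$ does too. So by Lemma \ref{lem: cent prod of chars and blocks}, $N$ has a unique $2$-block.
\item To pass from $N$ to $G$, I want $C_G(P)\le N$ for $P\in\mathrm{Syl}_2(N)$, so that Lemma \ref{lem: basics on principal blocks}(4) applies. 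We have $C_G(P)\le C_G(\mathrm{O}_2(G))$, and $C_G(P)$ normalizes each component, since a $2$-element-free action on the set of components cannot fix $P$ while moving a component.
\item If $C_G(P)\not\le N$, we get an element acting on a component $L$ and centralizing a Sylow $2$-subgroup of $L$. Using $\mathrm{Out}(M_{22})\cong\mathrm{Out}(M_{24})$-data (of order $2$ and $1$ respectively), together with the $2$-local structure of these groups, such an element induces an inner automorphism. Combined with $C_G(N)\le N$, this gives $C_G(P)\le N$.
\end{enumerate}

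The main obstacle is step (d) of necessity: classifying quasi-simple groups with exactly one $p$-block. It depends on CFSG and a case-by-case analysis of defect-zero characters in simple groups and their covers, including alternating groups, groups of Lie type via the Steinberg character, and sporadic groups. I would cite this from the literature rather than re-prove it.
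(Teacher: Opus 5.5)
The paper gives no argument for this statement; it imports it from Harris. Your outline is therefore a reconstruction, not a variant of an in-paper proof.

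\textbf{Necessity.} This half is a sound reduction. Steps (a), (b) and (e) are correct. In (c) you can avoid the central-product bookkeeping by applying (b) twice along $L\unlhd \mathrm{E}(G)\unlhd G$. All the depth sits in (d), which you cite. Be aware that the defect-zero literature alone does not give (d). At $p=2$, several sporadic groups (e.g.\ $M_{12}$, $J_2$, $HS$) and many $\mathsf{A}_n$ have no $2$-block of defect zero; at $p=3$ the same holds for e.g.\ $Suz$, $Co_3$ and some $\mathsf{A}_n$. For these groups a non-principal block of positive defect must be exhibited directly, and that case analysis is exactly what Harris's theorem supplies.

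\textbf{The gap: sufficiency in case (2), steps (c)--(d).} In (c), the reason you give (``a $2$-element-free action'') is not meaningful, since $\mathrm{C}_G(P)$ contains $\mathrm{Z}(P)$. The correct reason is this: $g\in\mathrm{C}_G(P)$ fixes $P\cap L\in\mathrm{Syl}_2(L)$ elementwise. So $L^g\neq L$ would give $P\cap L\le L\cap L^g\le\mathrm{Z}(L)$, and a central Sylow $2$-subgroup forces, by Burnside, a non-trivial abelian quotient of the perfect group $L$.

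In (d), the crucial claim is only asserted: that an element centralizing a Sylow $2$-subgroup of a component of type $M_{22}$ induces an inner automorphism. The claim is true. An outer $2$-element doing this would be central in a Sylow $2$-subgroup of $M_{22}{:}2$, and the centre of such a subgroup lies in $M_{22}$, as one can see inside the maximal subgroup $2^4{:}S_6$. Still, some such $2$-local verification is needed, and as written (``$\mathrm{Out}(M_{22})\cong\mathrm{Out}(M_{24})$'') the step is not a proof.

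\textbf{A fix that avoids the $2$-local check.} You do not actually need $\mathrm{C}_G(P)\le N$.
\begin{enumerate}[\rm (i)]
\item By the Frattini argument, $K:=N\,\mathrm{C}_G(P)\unlhd G$.
\item Let $x\in\mathrm{C}_G(P)$ have odd order. It normalizes every component $L$, as shown above. It induces on $L/\mathrm{Z}(L)$ an automorphism of odd order, which is inner because $|\mathrm{Out}(L/\mathrm{Z}(L))|\le 2$.
\item An automorphism of a perfect group that is trivial on $L/\mathrm{Z}(L)$ is trivial (three subgroups lemma). Hence there is $n\in\mathrm{E}(G)$ with $xn^{-1}\in\mathrm{C}_G(\mathrm{E}(G))$.
\item Both $x$ and $n$ centralize $\mathrm{O}_2(G)\le P$, so $xn^{-1}\in\mathrm{C}_G(\mathrm{F}^*(G))\le N$, and therefore $x\in N$.
\item Every element of $\mathrm{C}_G(P)$ is a product of commuting $2$- and $2'$-parts lying in $\mathrm{C}_G(P)$. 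So $K/N$ is a $2$-group, and $K$ has a unique $2$-block by the first clause of Lemma \ref{lem: basics on principal blocks}(4).
\item Take $Q\in\mathrm{Syl}_2(K)$ containing $P$. Then $\mathrm{C}_G(Q)\le\mathrm{C}_G(P)\le K$, so the second clause shows that $B_0(G)$ is the only block of $G$ covering $B_0(K)$. Since $B_0(K)$ is the only block of $K$, $G$ has a unique $2$-block.
\end{enumerate}
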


\subsection{Finite groups of Lie type}\label{sec: 2.3}

Finally, we collect some standard facts about finite groups of Lie type, including the Deligne--Lusztig theory for characters
and the block decomposition of these groups.
For further details, 
we refer to \cite{digne91, cabanes04}.

By a finite simple group $S$ of Lie type in characteristic $\ell$, we mean a group of the form $S = [G, G]$, where $G := \mathbf{G}^F$ is the fixed-point group of
a simple algebraic group $\mathbf{G}$ of adjoint type over the algebraic closure $\overline{\mathbb{F}_\ell}$
under a Steinberg endomorphism $F$ of $\mathbf{G}$.
Let $(\mathbf{G}^*, F^*)$ be in duality with $(\mathbf{G}, F)$, and set $G^* := (\mathbf{G}^*)^{F^*}$.
Then (rational) \emph{Lusztig series} $\mathcal{E}(G, s)$ give a partition of the irreducible characters of $G$ as 
\begin{equation}\label{eq_1}
   \mathrm{Irr}(G)=\bigsqcup_{s} \mathcal{E}(G, s)  
\end{equation}
where $s$ runs over a set of representatives for the $G^*$-conjugacy classes of semisimple elements of $G^*$.
To each $G^*$-conjugacy class of a semisimple element $s \in G^*$, 
since $\mathrm{C}_{\mathbf{G}^*}(s)$ is connected,
there corresponds a unique irreducible character $\chi_s \in \mathcal{E}(G, s)$ of degree
\[
\chi_s(1) = \left[G^* : \mathrm{C}_{G^*}(s)\right]_{\ell'},
\]
the $\ell'$-part of the index $\left[G^* : \mathrm{C}_{G^*}(s)\right]$ (see \cite[\S 14]{digne91}).
Such a character $\chi_s$ is called the \emph{semisimple character} of $G$ associated with $s$.

We now turn to the block theory of finite groups of Lie type.
For every semisimple $p'$-element $s \in G^*$, we define the union of Lusztig series
\begin{equation}\label{eq_2}
  \mathcal{E}_p(G, s) := \bigcup_{t} \mathcal{E}(G, st),
\end{equation}
where $t$ ranges over all $p$-elements in $\mathrm{C}_{G^*}(s)$.
For semisimple $p'$-elements $s, s' \in G^*$, the sets $\mathcal{E}_p(G, s)$ and $\mathcal{E}_p(G, s')$ either coincide or are disjoint. 
 Precisely, $\mathcal{E}_p(G, s) = \mathcal{E}_p(G, s')$ if and only if $s$ and $s'$ are conjugate in $G^*$.
Since every semisimple element of $G^*$ 
decomposes as a product of a semisimple $p'$-element $s\in G^*$ and some $p$-element $t\in \mathrm{C}_{G^*}(s)$,
it follows by (\ref{eq_1}) and (\ref{eq_2}) that
\[
\mathrm{Irr}(G)=\bigsqcup_{s} \mathcal{E}_p(G,s),
\]
  where $s$ runs over a set of representatives for the $G^*$-conjugacy classes of semisimple $p'$-elements of $G^*$.
 A fundamental theorem of Brou\'e and Michel \cite[Theorem 2.2]{broue89} states that each set $\mathcal{E}_p(G, s)$ is a union of $\mathrm{Irr}(B)$ for some $p$-blocks $B$ of $G$. 
Together with a complementary result of Hiss in \cite{hiss90}, we obtain the following key theorem.
For readers' convenience, we present a proof here.

\begin{thm}\label{thm: blocks of lie type}
  Let $G$ and $G^*$ be as above,
  and let $B$ be a $p$-block of $G$.
  Then there exists a semisimple $p'$-element $s\in G^*$ (unique up to $G^*$-conjugacy)
  such that $\mathrm{Irr}(B)\subseteq \mathcal{E}_p(G,s)$ and $\mathrm{Irr}(B)\cap\mathcal{E}(G,s)\neq \emptyset$.
  In particular, $\mathrm{Irr}(B_0(G))\subseteq\mathcal{E}_p(G,1)$.
\end{thm}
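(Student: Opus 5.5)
We combine the Broué--Michel theorem with Hiss' result. By \cite[Theorem 2.2]{broue89}, for each semisimple $p'$-element $s\in G^*$ the set $\mathcal{E}_p(G,s)$ is a union of sets $\mathrm{Irr}(B)$ for certain $p$-blocks $B$ of $G$. Together with the partition $\mathrm{Irr}(G)=\bigsqcup_s \mathcal{E}_p(G,s)$ established above, this means every block $B$ has $\mathrm{Irr}(B)$ contained in exactly one of these sets.

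For existence, fix a block $B$ and choose any $\chi\in\mathrm{Irr}(B)$. By the partition, $\chi\in\mathcal{E}_p(G,s)$ for some semisimple $p'$-element $s$, unique up to $G^*$-conjugacy. Broué--Michel then gives $\mathrm{Irr}(B)\subseteq\mathcal{E}_p(G,s)$, since $\mathcal{E}_p(G,s)$ is a union of whole block character sets and contains one character of $B$.

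The nontrivial point is $\mathrm{Irr}(B)\cap\mathcal{E}(G,s)\neq\emptyset$, and here we invoke Hiss \cite{hiss90}. His theorem says that every $p$-block $B$ with $\mathrm{Irr}(B)\subseteq\mathcal{E}_p(G,s)$ contains a character of the series $\mathcal{E}(G,s)$. I would cite this directly; it is the main obstacle, and proving it from scratch would require Hiss's argument via Harish-Chandra restriction and projective characters in $\mathcal{E}_p$.

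For uniqueness, suppose $s'$ also satisfies both conditions. Then $\mathrm{Irr}(B)\subseteq\mathcal{E}_p(G,s)\cap\mathcal{E}_p(G,s')$, and this intersection is nonempty. The sets $\mathcal{E}_p(G,s)$ and $\mathcal{E}_p(G,s')$ either coincide or are disjoint, and they coincide exactly when $s$ and $s'$ are $G^*$-conjugate. Hence $s$ and $s'$ are conjugate.

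For the final statement, take $B=B_0(G)$. Since $1_G\in\mathcal{E}(G,1)\subseteq\mathcal{E}_p(G,1)$ and $1_G\in\mathrm{Irr}(B_0(G))$, the first step gives $\mathrm{Irr}(B_0(G))\subseteq\mathcal{E}_p(G,1)$.
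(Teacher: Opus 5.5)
Your proof is correct and takes essentially the same route as the paper. The paper simply cites \cite[Theorem 9.12]{cabanes04}, which packages exactly the Brou\'e--Michel theorem and Hiss' result that you invoke separately, and it then treats $B_0(G)$ via $1_G\in\mathcal{E}(G,1)$ just as you do. Your explicit uniqueness argument via the disjointness of the sets $\mathcal{E}_p(G,s)$ spells out what the paper leaves implicit; both versions tacitly require $p$ to differ from the defining characteristic $\ell$, as Brou\'e--Michel does.
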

\begin{proof}
  Note that $\mathrm{Irr}(G)$ are partitioned into $\mathcal{E}_p(G,s)$
  where $s$ runs over a set of representatives for the $G^*$-conjugacy classes of semisimple $p'$-elements of $G^*$.
  By \cite[Theorem 9.12]{cabanes04}, there exists a semisimple $p'$-element $s\in G^*$
  such that $\mathrm{Irr}(B)\subseteq \mathcal{E}_p(G,s)$ and $\mathrm{Irr}(B)\cap \mathcal{E}(G,s)\neq \varnothing$.
  Since the principal character $1_G\in \mathrm{Irr}(B_0)\cap\mathcal{E}(G,1)$,
  we are done by \cite[Definition 9.13]{cabanes04}.
\end{proof}

\section{Real blocks}

We begin this section by recalling the classification of the finite quasi-simple groups with a unique real $2$-block, due to McHugh and Schaeffer Fry. 
This classification was established in answer to  
 a question posed by Gow and Murray on the existence of non-trivial $2$-Brauer characters of quadratic type in non-abelian simple groups.

\begin{thm}[\mbox{\cite[Theorem 1.1]{mchugh26}}]\label{thm: urb}
  If $G$ is a finite quasi-simple group, then $G$ has a unique real $2$-block
  if and only if $G/\mathrm{Z}(G)\in \{ M_{11},M_{22},M_{23}, M_{24}, \mathrm{PSL}_{3}(3),\mathrm{PSU}_{3}(3) \}$. 
\end{thm}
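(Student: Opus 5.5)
The statement is a classification over quasi-simple groups, so the plan is to reduce to the simple quotient and then run through the classification of finite simple groups. Throughout, ``$G$ has a unique real $2$-block'' means that every real-valued $\chi\in\mathrm{Irr}(G)$ lies in $B_0(G)$. This uses the fact recalled in the introduction that a $2$-block is real exactly when it contains a real-valued irreducible character.

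\textbf{Step 1: reduction to the simple quotient.} Let $Z=\mathrm{Z}(G)$, $S=G/Z$, and let $Z_2$ be the Sylow $2$-subgroup of $Z$. Since $Z_2$ is a central $2$-subgroup, domination gives a bijection between the $2$-blocks of $G$ and those of $G/Z_2$. Moreover, the real-valued irreducible characters of $G$ in a block map onto real-valued characters in the dominated block. Hence I may assume $|Z|$ is odd.

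\emph{If $S$ has a non-principal real $2$-block,} so does $G$. Indeed, a real-valued character of $S$ outside $B_0(S)$ inflates to a real-valued character of $G$. It lies outside $B_0(G)$, because $\mathrm{Irr}(B_0(G))\cap \mathrm{Irr}(G/Z)=\mathrm{Irr}(B_0(G/Z))$ when $|Z|$ is odd. So the ``only if'' direction reduces to showing that every simple $S$ outside the six listed groups has a non-principal real $2$-block.

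\emph{For the ``if'' direction} I would check the six groups and their covers directly from the known character tables and $2$-decomposition matrices. The multipliers of $M_{11},M_{23},M_{24},\mathrm{PSL}_3(3),\mathrm{PSU}_3(3)$ are trivial, so only the covers of $M_{22}$ need inspection. For these, the faithful characters over the odd part of the centre, of order $3$, are non-real. Those over the $2$-part are handled by domination.

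\textbf{Step 2: exhibiting non-principal real blocks for simple $S$.}
\begin{itemize}
\item \emph{Lie type in characteristic $2$.} The Steinberg character is rational and of $2$-defect zero, hence lies in a non-principal real block. The small exceptions, such as $\mathrm{PSL}_2(4)\cong\mathrm{PSL}_2(5)$, are treated elsewhere.
\item \emph{Lie type in odd characteristic, with $S=[G,G]$, $G=\mathbf{G}^F$ adjoint.} I would choose a non-trivial semisimple element $s\in G^*$ of odd order that is $G^*$-conjugate to $s^{-1}$, for instance a suitable element of a torus normalized by an element inverting it. Complex conjugation maps $\mathcal{E}(G,s)$ to $\mathcal{E}(G,s^{-1})=\mathcal{E}(G,s)$ and preserves degrees. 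So the unique semisimple character $\chi_s$ is real-valued. By Theorem~\ref{thm: blocks of lie type}, $\chi_s\in\mathcal{E}_2(G,s)$, which is disjoint from $\mathcal{E}_2(G,1)\supseteq\mathrm{Irr}(B_0(G))$. One must then pass from $G$ to $S$, where $[G:S]$ can be even. I would arrange that $s$ lies in $[G^*,G^*]$, or that $\chi_s$ restricts irreducibly to $S$. Then $(\chi_s)_S$ is real-valued. It is not in $B_0(S)$, since $B_0(G)$ is the only block covering $B_0(S)$ when $G/S$ is a $2$-group (Lemma~\ref{lem: basics on principal blocks}(4)); it suffices to keep the odd part of $G/S$ under control.
\item \emph{Alternating groups.} I would take $\chi^\lambda$ with $\lambda$ a non-self-conjugate $2$-core, for example a staircase of suitable size plus a small adjustment. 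It is rational and of defect zero, and restricts irreducibly to $A_n$ with a real value.
\item \emph{Sporadic groups and the Tits group.} I would check directly from the character tables and the known $2$-block distributions that a real character outside $B_0$ exists, with the four sporadic exceptions recorded.
\end{itemize}

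\textbf{Main obstacle.} The hard part is the odd-characteristic Lie type case. There are two difficulties: producing a real odd-order $s\neq 1$ in every family and every rank, and passing correctly from $G$ to $[G,G]$. This is where $\mathrm{PSL}_3(3)$ and $\mathrm{PSU}_3(3)$ must genuinely fail. I would first try the generic torus argument: use a Coxeter torus, or the torus attached to the longest element $w_0$ when $w_0=-1$, and an $s$ inside it of odd prime order. Low-rank and small-$q$ cases such as $\mathrm{PSL}_2(q)$, $\mathrm{PSL}_3(q)$, $\mathrm{PSU}_3(q)$ and $G_2(3)$ would be handled separately by explicit character tables.
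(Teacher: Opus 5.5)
You should know first that the paper does not prove this statement; it quotes it from \cite{mchugh26}, so your outline has to stand on its own. Step~1 and the ``if'' direction are fine. For the reduction modulo $\mathrm{Z}(G)_2$, justify the preservation of reality by $\mathrm{IBr}(b)=\mathrm{IBr}(B)$, as in Theorem~\ref{thm: bij real 2-blocks}: real characters of $G$ that are faithful on $\mathrm{Z}(G)_2$ do not map to characters of the quotient. Step~2, however, contains a step that fails and leaves the main case open.

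\emph{Alternating groups.} The character you propose does not exist. The $2$-cores are exactly the staircases $(k,k-1,\dots,1)$, and these are self-conjugate, so there is no non-self-conjugate $2$-core. A ``staircase plus a small adjustment'' is no longer a $2$-core, and $\mathsf{S}_n$ has $2$-defect zero characters only for triangular $n$. For instance, $\mathsf{A}_7$ has no character of $2$-defect zero at all: its degrees are $1,6,10,10,14,14,15,21,35$ and $|\mathsf{A}_7|_2=8$. Use a non-principal block of positive defect instead. Take $\lambda=(n-1,1)$ for odd $n\ge 7$ (with $2$-core $(2,1)$) and $\lambda=(n-3,2,1)$ for even $n\ge 8$ (with $2$-core $(3,2,1)$). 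These partitions are not self-conjugate, so $\chi^\lambda$ restricts irreducibly to a rational character of $\mathsf{A}_n$. Moreover $\chi^\lambda\notin B_0(\mathsf{S}_n)$, since the principal block has $2$-core $\varnothing$ or $(1)$. As $[\mathsf{S}_n:\mathsf{A}_n]=2$, Lemma~\ref{lem: basics on principal blocks}(4) then puts the restriction outside $B_0(\mathsf{A}_n)$.

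\emph{Odd characteristic.} This is the heart of the classification, and the mechanism you give for passing from $G$ to $S$ does not work as stated. Since $\mathbf{G}$ is adjoint, $G^*$ is simply connected and perfect here, so the condition $s\in[G^*,G^*]$ is vacuous. Irreducibility of $(\chi_s)_S$ requires $sz\not\sim s$ for all $1\neq z\in\mathrm{Z}(G^*)$, and this cannot always be arranged. In $\mathrm{SL}_3(7)$, the only non-trivial real semisimple class of odd order is that of $\mathrm{diag}(1,\omega,\omega^2)$ with $\omega\in\mathbb{F}_7$ of order $3$, and this class is stable under multiplication by the central element $\omega I$.

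The right tool is parity. Since $o(s)$ is odd, $sz\sim s$ forces $o(z)$ to be odd. Hence the stabilizer $\{\hat z:\chi_{sz}=\chi_s\}$ of $\chi_s$ in $\mathrm{Irr}(G/S)$ has odd order $e^2t$, where $(\chi_s)_S=e(\alpha_1+\cdots+\alpha_t)$. So $t$ is odd, complex conjugation permutes the $\alpha_i$, and all of them are real, as in Lemma~\ref{lem: real chars}(1). Every character of $G$ lying over $\alpha_i$ is some $\chi_{sz}$. None of these lies in $\mathcal{E}_2(G,1)$, because a real element $s\neq 1$ of odd order is non-central. Lemma~\ref{lem: basics on principal blocks}(3) then gives $\alpha_i\notin B_0(S)$.

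What remains is to produce a non-trivial real semisimple $s\in G^*$ of odd order, which you did not supply. This is easy:
\begin{itemize}
\item If $w_0=-1$, every semisimple element of $G^*$ is real, since centralizers are connected, and $|G^*|$ has an odd prime divisor other than $\ell$.
\item For $A_n^\epsilon(q)$ with $n\ge 2$, use $\mathrm{SL}_2(q)\le G^*$ when $q\neq 3$. When $q=3$ and $n\ge 3$, use an element of order $5$ in $\mathrm{SL}_4(3)$ or $\mathrm{SU}_4(3)$.
\item For $D_{2n+1}^\epsilon(q)$ and $E_6^\epsilon(q)$, use subgroups of type $B_{2n}$ and $F_4$ respectively.
\end{itemize}
A Coxeter torus is the wrong place to look in type $A_n$ with $n+1$ odd, because its elements are not real. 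Indeed, these are the only non-trivial odd-order semisimple elements of $\mathrm{SL}_3(3)$ and $\mathrm{SU}_3(3)$.

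Finally, in characteristic $2$ the exceptions to the Steinberg argument are the non-perfect cases $\mathrm{Sp}_4(2)'$, $G_2(2)'\cong \mathrm{PSU}_3(3)$ and ${}^2F_4(2)'$, not $\mathrm{PSL}_2(4)$. For $G_2(2)'$, the Steinberg character of $G_2(2)$ splits into the two non-real characters of degree $32$.
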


Since distinct $p$-blocks of a finite group $G$ are disjoint with respect to both irreducible characters and irreducible $p$-Brauer characters, 
there is an alternative definition of real $p$-blocks as follows:
a $p$-block $B$ of $G$ is real if $\overline{\chi} \in \mathrm{Irr}(B)$ for some character $\chi \in \mathrm{Irr}(B)$.
  In particular, the principal $p$-block is always real.
For $2$-blocks, 
recall that this definition admits an equivalent and simpler characterization: a $2$-block is real if and only if it contains at least one real-valued irreducible character.

\begin{lem}\label{lem: basic}
	Let $p$ be a prime, and let $G$ be a finite group.
  Then the following hold.
  \begin{enumerate}[\rm (1)]
       \item Let $N\unlhd G$ and assume that either $p\nmid |N|$ or $N\leq \mathrm{Z}(G)$. 
       If $G$ has a unique real $p$-block, then $G/N$ has a unique real $p$-block.
		\item  If $N$ is a normal subgroup of $G$ of odd index in $G$,
    then
    $G$ has a unique real $2$-block if and only if $N$ has a unique real $2$-block.
    \item  Suppose that, using the notation of Lemma {\rm\ref{lem: cent prod of chars and blocks}}, $G$ is the central product of its subgroups $H_i$ where $1 \leq i\leq t$.
    If $Z:=\bigcap_{i=1}^{t}H_i$ is a $2$-group,
    then $G$ has a unique real $2$-block if and only if every $H_i$ has a unique real $2$-block.
  \end{enumerate}
\end{lem}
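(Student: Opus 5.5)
Each part comes down to showing that every real block lies in, or is dominated by, the principal block. Throughout, for $p=2$ a block is real exactly when it contains a real-valued irreducible character; for general $p$ in part (1) I use the definition via $\overline{\chi}\in\mathrm{Irr}(B)$.

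For part (1), let $b$ be a real $p$-block of $G/N$. By the domination theory (\cite[Ch.5, Lemma 8.6]{nagao89}), $b$ is contained in a block $B$ of $G$, meaning $\mathrm{Irr}(b)\subseteq\mathrm{Irr}(B)$. If $\chi\in\mathrm{Irr}(b)$ and $\overline{\chi}\in\mathrm{Irr}(b)$, then $B$ is real, so $B=B_0(G)$. It then suffices to show that $\mathrm{Irr}(B_0(G))\cap\mathrm{Irr}(G/N)=\mathrm{Irr}(B_0(G/N))$.
\begin{itemize}
\item If $p\nmid|N|$, this is the standard fact that the blocks of $G$ with $N$ in the kernel are precisely the blocks of $G/N$, with no merging (\cite[Theorem 9.9]{navarro98}).
\item If $N\le \mathrm{Z}(G)$, write $N=N_p\times N_{p'}$. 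The $p'$-part reduces to the first case. For a central $p$-subgroup, the blocks of $G/N_p$ are in bijection with the blocks of $G$ dominating them, and each block of $G$ dominates a unique block of $G/N_p$ (\cite[Theorem 9.10]{navarro98}).
\end{itemize}
In either case $b$ is dominated by $B_0(G)$, which forces $b=B_0(G/N)$.

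For part (2), $N\unlhd G$ has odd index and $p=2$.
\begin{itemize}
\item Suppose $N$ has a unique real $2$-block. Let $\chi\in\mathrm{Irr}(G)$ be real-valued. By Lemma \ref{lem: real chars}(1), every constituent of $\chi_N$ is real-valued, hence lies in $B_0(N)$. So the block of $\chi$ covers $B_0(N)$. Since $G/N$ has odd order, the blocks covering $B_0(N)$ correspond to blocks of $G/N$ (\cite[Theorem 9.10/Corollary 9.6]{navarro98} type arguments). The one containing a real character should then be $B_0(G)$.

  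I expect this to be the main obstacle. My plan is to use Lemma \ref{lem: real chars}(2): $\chi$ is the unique real-valued character over $\theta:=$ a constituent of $\chi_N$. Pick $\chi_0\in\mathrm{Irr}(B_0(G)|\theta)$ (Lemma \ref{lem: basics on principal blocks}(3)). Then $\overline{\chi_0}$ also lies over $\overline{\theta}=\theta$ and in $B_0(G)$, which is real. By Gallagher and the odd order of $G/N$, the real member of $\mathrm{Irr}(G|\theta)$ must lie in $B_0(G)$. Concretely, I would argue that $B_0(G)$ contains all of $\mathrm{Irr}(G|\theta)\cap\mathrm{Irr}(B_0(G))$, and that the real character can be obtained as the canonical extension over $\mathrm{I}_G(\theta)$ induced up, with the block controlled via a Fong–Reynolds reduction to the stabilizer.
\item Conversely, suppose $G$ has a unique real $2$-block and $\theta\in\mathrm{Irr}(N)$ is real-valued. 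Lemma \ref{lem: real chars}(2) gives a real $\chi$ over $\theta$. Then $\chi\in B_0(G)$, so $\theta\in B_0(N)$ by Lemma \ref{lem: basics on principal blocks}(2).
\end{itemize}

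For part (3), I use Lemma \ref{lem: cent prod of chars and blocks}. Since $Z$ is a $2$-group, every $\lambda\in\mathrm{Irr}(Z)$ that appears under a real character satisfies $\overline{\lambda}=\lambda$, which forces $\lambda^2=1$. Moreover, $\alpha_1\circ\cdots\circ\alpha_t$ is real if and only if each $\alpha_i$ is real, because of the uniqueness of $\Lambda_\lambda$ and the fact that $\overline{\Lambda_\lambda(\alpha_i)}=\Lambda_{\overline\lambda}(\overline{\alpha_i})$. The restricted bijection onto $\mathrm{Irr}(B_0(G)|\lambda)$ then gives both directions:
\begin{itemize}
\item A real character of $G$ lies in $B_0(G)$ if and only if each of its real components lies in $B_0(H_i)$.
\item Conversely, a real $\alpha\in\mathrm{Irr}(H_1|\lambda)$ can be completed to a real character of $G$ by choosing real $\alpha_i\in\mathrm{Irr}(H_i|\lambda)$. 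These exist because $\lambda$ is real and $H_i$ has a real character over $\lambda$: take a real constituent of $(\lambda)^{H_i}$, or use $\lambda=1$ as the only real case when $\lambda\ne1$ fails.

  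This existence is the subtle point. I would handle it via real classes, or by noting that $\overline{\alpha}$ and $\alpha$ together give a real character of $G$ in the same block when $i$ varies.
\end{itemize}
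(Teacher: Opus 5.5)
Your part (1) and the forward direction of (2) follow the paper's proof. The genuine gap is the reverse direction of (2).

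\textbf{The gap in (2), reverse direction.} Knowing that the block of a real $\chi$ covers $B_0(N)$ does not put it in $B_0(G)$. When $[G:N]$ is odd there may be several blocks covering $B_0(N)$: for $G=N\times\mathsf{C}_q$ these are the $q$ blocks $B_0(N)\otimes\lambda$, $\lambda\in\mathrm{Irr}(\mathsf{C}_q)$. So ``the one containing a real character should then be $B_0(G)$'' is exactly the claim to be proved. Your observation that complex conjugation preserves $\mathrm{Irr}(B_0(G)|\theta)$ only helps if this set has a conjugation-fixed point (e.g.\ it is a singleton or has odd size), and you never establish that. Gallagher does not supply it. Even when $\theta$ extends, it only tells you the real member of $\mathrm{Irr}(G|\theta)$ is the real extension $\hat\theta$. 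It does not tell you that $\hat\theta$, rather than some $\hat\theta\beta$ with $1\neq\beta\in\mathrm{Irr}(G/N)$, lies in $B_0(G)$. Fong--Reynolds does not help either: the stabilizer of $B_0(N)$ is all of $G$, and passing to $\mathrm{I}_G(\theta)$ just reproduces the same question there.

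\textbf{The missing idea} is the paper's dichotomy. Using solvability of $G/N$, reduce to $[G:N]=q$ prime, and take $P\in\mathrm{Syl}_2(N)$.
\begin{itemize}
\item If $\mathrm{C}_G(P)\le N$, then $B_0(G)$ is the only block covering $B_0(N)$ by Lemma~\ref{lem: basics on principal blocks}(4), and you are done.
\item Otherwise $G=N\mathrm{C}_G(P)$, and Alperin--Dade (Lemma~\ref{lem: isoblock}) makes restriction a bijection $\mathrm{Irr}(B_0(G))\to\mathrm{Irr}(B_0(N))$. Hence exactly one $\alpha\in\mathrm{Irr}(B_0(G))$ satisfies $\alpha_N=\theta$. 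Since $\overline{\alpha}\in\mathrm{Irr}(B_0(G))$ also restricts to $\overline{\theta}=\theta$, $\alpha$ is real, and so $\alpha=\chi$ by Lemma~\ref{lem: real chars}(2).
\end{itemize}
Your $\chi_0$ step is precisely this last move, but it needs the uniqueness that Alperin--Dade provides. (The prime-index reduction can be avoided by applying the two cases to $N\le M:=N\mathrm{C}_G(P)\unlhd G$.)

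\textbf{Part (3).} You follow the paper's route (componentwise reality plus Lemma~\ref{lem: cent prod of chars and blocks}), and this settles ($\Leftarrow$). For ($\Rightarrow$), however, your justification for padding is false. A real $\lambda\in\mathrm{Irr}(Z)$ need not lie under any real character of $H_i$: take $H_i=\mathsf{C}_4\ge Z=\mathsf{C}_2$ and $\lambda\ne 1_Z$, and both characters over $\lambda$ take the value $\pm i$ on a generator. So a real $\alpha_1$ over such a $\lambda$ cannot always be completed to a real character of $G$.

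Work with $\lambda=1_Z$ instead. By Theorem~\ref{thm: bij real 2-blocks}(1), whose proof does not use this lemma, a real $2$-block $b_1$ of $H_1$ dominates a real block of $H_1/Z$. Hence $b_1$ contains a real character $\beta$ trivial on $Z$. Then $\beta\circ 1_{H_2}\circ\cdots\circ 1_{H_t}$ is real, so it lies in $B_0(G)$. Lemma~\ref{lem: cent prod of chars and blocks} then forces $\beta\in\mathrm{Irr}(B_0(H_1))$, i.e.\ $b_1=B_0(H_1)$. For $Z=1$, which is the only case used later, padding with principal characters is immediate; the paper's own proof of (3) leaves this step implicit.
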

\begin{proof}
    (1) 
    Let $b$ be a $p$-block of $G/N$.
    By \cite[Page 199, Theorems 9.9, 9.10]{navarro98}, 
    there exists a unique $p$-block $B$ of $G$ dominating $b$,
    and $\mathrm{Irr}(b)=\mathrm{Irr}(B)\cap \mathrm{Irr}(G/N)$. 
    By the definition of real blocks, 
    if $b$ is real then so is $B$.
     Now suppose $b$ is a real $p$-block distinct from the principal $p$-block $b_0$ of $G/N$.
    Since $\mathrm{Irr}(b)\cap \mathrm{Irr}(b_0)=\varnothing$,
    and $B_0(G)$ dominates $b_0$ by Lemma \ref{lem: basics on principal blocks}(1), it follows that $B$ is a real $p$-block of $G$ distinct from $B_0(G)$. 
    This yields a contradiction.

  (2) We first assume that $G$ has a unique real $2$-block. 
  Let $\theta \in \mathrm{Irr}(N)$ be a real-valued character. 
  Since the index $[G:N]$ is odd, 
  by Lemma \ref{lem: real chars}(2), there exists a unique real-valued character $\chi \in \mathrm{Irr}(G|\theta)$. 
  By our hypothesis, $\chi$ lies in $B_0(G)$. 
  Consequently, Lemma \ref{lem: basics on principal blocks}(2) forces 
  $\theta \in \mathrm{Irr}(B_0(N))$. 
  This shows that every real-valued irreducible character of $N$ belongs to $B_0(N)$, so $N$ has a unique real $2$-block.
  

  We assume next that $N$ has a unique real $2$-block,
  and proceed by induction on $[G:N]$ to show that $G$ has a unique real $2$-block.
  If $[G:N]=1$, then we are done. 
  Hence, we may assume that $[G:N]>1$.
  Since $G/N$ has odd order, by Feit--Thompson's theorem and induction, 
  we may assume that $[G:N]=q$ for some odd prime $q$.
  Let $P\in \mathrm{Syl}_{2}(N)$, and observe that $P$ is also a Sylow $2$-subgroup of $G$.
  If $\mathrm{C}_{G}(P)\leq N$,
  as $B_0(G)$ is the unique $2$-block covering $B_0(N)$ by Lemma \ref{lem: basics on principal blocks}(4),
  it follows by Lemma \ref{lem: real chars}(1) that $G$ has a unique real $2$-block $B_0(G)$.
 
  So, we may assume that $G=N \mathrm{C}_{G}(P)$.
  By Lemma \ref{lem: isoblock}, restriction of characters defines a bijection $\mathrm{Irr}(B_0(G))\to\mathrm{Irr}(B_0(N))$. 
  Let $\chi\in \mathrm{Irr}(G)$ be a real-valued character, and let $\theta$ be an irreducible constituent of $\chi_N$.
  Then $\theta$ is also real-valued by Lemma \ref{lem: real chars}(1).
  As $N$ has a unique real $2$-block, it forces $\theta \in \mathrm{Irr}(B_0(N))$.
  Through the aforementioned bijection, there exists a unique $\alpha \in \mathrm{Irr}(B_0(G))$ such that $\alpha_N = \theta$. 
  Since $\mathrm{Irr}(B_0(G))$ is invariant under taking complex conjugation, $\overline{\alpha} \in \mathrm{Irr}(B_0(G))$, and we have $\overline{\alpha}_N = \overline{\theta} = \theta$. 
  The uniqueness of $\alpha$ then forces $\overline{\alpha} = \alpha$, meaning $\alpha$ is real-valued. 
  Furthermore, since there is a unique real-valued irreducible character of $G$ lying over $\theta$ by Lemma \ref{lem: real chars}(2), 
  we must have $\chi = \alpha$, which proves that $\chi \in \mathrm{Irr}(B_0(G))$. 
  Thus, $G$ has a unique real $2$-block.

  (3) Let $\chi\in \mathrm{Irr}(G)$.
   By Lemma \ref{lem: cent prod of chars and blocks},
   there exists some $\lambda\in \mathrm{Irr}(Z)$
   such that $\chi=\alpha_1\circ \cdots \circ \alpha_t$ for $\alpha_i\in \mathrm{Irr}(H_i|\lambda)$.
   Noting that $\chi$ is real-valued if and only if all $\alpha_i$ are real-valued,
   we conclude that part (3) holds by Lemma \ref{lem: cent prod of chars and blocks}.
\end{proof}

  We remark that for odd primes $p$, the converse of part (1) in the preceding lemma fails in general. 
  For example, take $G = \mathsf{D}_{2p} \times \mathsf{C}_2$ and set $N = \mathrm{Z}(G)$. 
  Then $G$ is solvable with $\mathrm{O}_{p'}(G)=N$, so that $G/N$ has a unique $p$-block, yet $G$ admits at least two distinct real $p$-blocks. 
  By contrast, the situation for $p=2$ is considerably better: there is a natural bijection between the real $2$-blocks of $G/N$ and those of $G$, whenever either $N\leq \mathrm{Z}(G)$,
  or $|N|$ is odd and every $G$-conjugacy class contained in $N$ has odd size.
  To establish this correspondence, we first prove the following auxiliary lemma.

Let $N$ be a normal subgroup of a finite group $G$. 
The Galois group $\mathcal{G}:=\mathrm{Gal}(\mathbb{Q}_{|G|}/\mathbb{Q})$ of \emph{the $|G|$-th cyclotomic extension} naturally acts on $\mathrm{Irr}(N)$ via
\[
\theta^{\sigma}(x) = \theta(x)^{\sigma}
\]
for all $\theta \in \mathrm{Irr}(N)$, $\sigma \in \mathcal{G}$ and $x \in N$. 
Denote by $\tau\in\mathcal{G}$ the complex conjugation.
Then 
$$\theta^{\tau}(x)=\overline{\theta(x)}=\theta(x^{-1})$$ 
for all $\theta \in \mathrm{Irr}(N)$ and $x\in N$.
Accordingly, $(x^{N})^{\tau}:=(x^{-1})^{N}$ defines an action of $\langle \tau\rangle$ on 
the set $\mathrm{Cl}(N)$ of $N$-conjugacy classes.
Under this action, a class $x^N\in\mathrm{Cl}(N)$ is $\tau$-invariant if and only if $x^N$ is a real $N$-conjugacy class.
Since the action of $\tau$ commutes with $G$-conjugation  
on both $\mathrm{Irr}(N)$ and $\mathrm{Cl}(N)$,
the direct product $\langle \tau\rangle \times G$ acts compatibly on these two sets.

\begin{lem}\label{lem: all real in G/N}
  Let $G$ be a finite group admitting an odd-order normal subgroup $N$.
  Assume that every $G$-conjugacy class contained in $N$ has odd size.
  Then every real-valued irreducible character of $G$ lies in $\mathrm{Irr}(G/N)$.
\end{lem}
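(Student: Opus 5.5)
We argue by counting $\tau$-stable $G$-orbits on $\mathrm{Irr}(N)$ and on $\mathrm{Cl}(N)$, using the compatible action of $A:=\langle \tau\rangle\times G$ described just before the statement. Let $\chi\in\mathrm{Irr}(G)$ be real-valued and let $\theta$ be an irreducible constituent of $\chi_N$. By Clifford's theorem, $\chi_N=e\sum_{\vartheta\in\mathcal{O}}\vartheta$, where $\mathcal{O}$ is the $G$-orbit of $\theta$. Since $\chi=\overline{\chi}$, we get $\chi_N=\overline{\chi_N}$, so $\mathcal{O}^{\tau}=\mathcal{O}$; that is, $\mathcal{O}$ is a $\tau$-stable $G$-orbit on $\mathrm{Irr}(N)$. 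It therefore suffices to show that $\{1_N\}$ is the only $\tau$-stable $G$-orbit on $\mathrm{Irr}(N)$. Then $\theta=1_N$ and $N\leq\ker(\chi)$, as required.

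\textbf{Step 1 (Brauer's permutation lemma for $A$).} Let $X$ be the character table of $N$, with rows indexed by $\mathrm{Irr}(N)$ and columns by $\mathrm{Cl}(N)$. Each $a\in A$ permutes rows and columns, and $\vartheta^{a}(x^{a})=\vartheta(x)$ holds up to the appropriate convention. This gives an identity $P_aX=XQ_a$, where $P_a$ and $Q_a$ are the permutation matrices of $a$ on $\mathrm{Irr}(N)$ and on $\mathrm{Cl}(N)$. Since $X$ is invertible, $P_a=XQ_aX^{-1}$, so $\mathrm{tr}\,P_a=\mathrm{tr}\,Q_a$. Hence every $a\in A$ fixes equally many irreducible characters and classes of $N$. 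By Burnside's orbit-counting lemma, applied both to $G$ and to $A$:
\begin{itemize}
\item the number of $G$-orbits on $\mathrm{Irr}(N)$ equals the number of $G$-orbits on $\mathrm{Cl}(N)$;
\item the same equality holds for $A$-orbits.
\end{itemize}
Each $A$-orbit is either a single $\tau$-stable $G$-orbit or the union of two $G$-orbits interchanged by $\tau$. Consequently
\[
\#\{\tau\text{-stable }G\text{-orbits}\}=2\cdot\#(A\text{-orbits})-\#(G\text{-orbits}),
\]
and this number is the same on $\mathrm{Irr}(N)$ as on $\mathrm{Cl}(N)$.

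\textbf{Step 2 (counting on the class side).} A $G$-orbit on $\mathrm{Cl}(N)$ is a $G$-conjugacy class $x^G\subseteq N$. It is $\tau$-stable exactly when $x^{-1}\in x^G$. In that case, inversion $y\mapsto y^{-1}$ is a permutation of $x^G$ of order at most $2$. By hypothesis $|x^G|$ is odd, so this permutation has a fixed point $y=y^{-1}$. But $y\in N$ has odd order, so $y=1$, and hence $x^G=\{1\}$. Thus $\{1\}$ is the unique $\tau$-stable $G$-orbit on $\mathrm{Cl}(N)$.

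By Step 1, there is then exactly one $\tau$-stable $G$-orbit on $\mathrm{Irr}(N)$. Since $\{1_N\}$ is evidently such an orbit, it is the only one, which finishes the argument.

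The only delicate point is Step 1: one must check that the actions of complex conjugation and of $G$-conjugation combine into a single action of $A$ compatible with the character table, so that the permutation-matrix identity holds for every element $a\in A$, not just for the generators. This compatibility holds because $\tau$ commutes with $G$-conjugation on both $\mathrm{Irr}(N)$ and $\mathrm{Cl}(N)$, as noted before the statement.
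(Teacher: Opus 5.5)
Your proof is correct and follows the paper's route: apply Brauer's permutation lemma to $\langle\tau\rangle\times G$ (the paper cites it as Isaacs' Corollary 6.33, while you sketch its proof) to equate the numbers of $\tau$-stable $G$-orbits on $\mathrm{Irr}(N)$ and $\mathrm{Cl}(N)$, show that $\{1\}$ is the only real $G$-class inside $N$, and finish with Clifford's theorem. The one difference is in that class-side step. The paper argues that a $2$-element inverting $x$ normalizes a Sylow $2$-subgroup of $\mathrm{C}_G(x)$, which is Sylow in $G$, and hence centralizes $x$. You instead note that inversion is an involution on the odd-size set $x^G$, so it has a fixed point, and that fixed point must be $1$ because $|N|$ is odd; this is an equally valid and more elementary route.
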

\begin{proof}
  Let $\tau\in \mathrm{Gal}(\mathbb{Q}_{|G|}/\mathbb{Q})$ be the complex conjugation,
  and set $\Gamma=\langle \tau\rangle \times G$.
  Then both $\Gamma$ and $G$ act on $\mathrm{Irr}(N)$ and $\mathrm{Cl}(N)$.
  By \cite[Corollary 6.33]{isaacs76}, the number of orbits of $\Gamma$ on $\mathrm{Irr}(N)$ 
   equals the number of orbits of $\Gamma$ on $\mathrm{Cl}(N)$, and the same holds for $G$.

  Let $k$ be the number of $G$-orbits in $\mathrm{Irr}(N)$, let $m$ be the number of $\tau$-invariant $G$-orbits on $\mathrm{Irr}(N)$, and let $n$ be the number of $\tau$-invariant $G$-orbits on $\mathrm{Cl}(N)$.
  Then the total number of $\Gamma$-orbits on $\mathrm{Irr}(N)$ is $m+\frac{k-m}{2}=\frac{k+m}{2}$,
  while the total number of $\Gamma$-orbits on $\mathrm{Cl}(N)$ is $n+\frac{k-n}{2}=\frac{k+n}{2}$.
  Equating these two quantities yields $\frac{k+m}{2} = \frac{k+n}{2}$, so $m = n$.

We now show that $m=1$.
Since $m=n$, it suffices to prove $n=1$.
Let $x^G\subseteq N$ be a real $G$-conjugacy class.
Then there exists $g\in G$ such that $x^g=x^{-1}$.
Since any odd power of $g$ also inverts $x$, we may assume that $g$ is a $2$-element.
As $2\nmid|x^G|$, $\mathrm{C}_G(x)$ contains a Sylow $2$-subgroup of $G$.
Observe that
\[
\mathrm{C}_G(x)^g=\mathrm{C}_G(x^g)=\mathrm{C}_G(x^{-1})=\mathrm{C}_G(x),
\]
and so $\langle g\rangle$ normalizes some Sylow $2$-subgroup $P$ of $\mathrm{C}_{G}(x)$.
Hence $P\langle g\rangle$ is a $2$-subgroup of $G$, which forces $g\in P$, because $P$ is also a Sylow $2$-subgroup of $G$.
It follows that $x^2=1$.
Since $|N|$ is odd, we obtain $x=1$.
Consequently, there is exactly one $\tau$-invariant $G$-orbit on $\mathrm{Cl}(N)$, and thus $n=1$.

Therefore, for every non-trivial $\lambda\in \mathrm{Irr}(N)$,
  its $G$-orbit $\{ \lambda^g:g\in G \}$ is not $\tau$-invariant.
  By Clifford's theorem, this implies that every real-valued irreducible character of $G$ must lie in $\mathrm{Irr}(G/N)$.
\end{proof}

\begin{thm}\label{thm: bij real 2-blocks}
  Let $G$ be a finite group admitting a normal subgroup $N$.
  Suppose that one of the conditions holds:
  \begin{enumerate}[\rm (1)]
    \item $N\leq \mathrm{Z}(G)$;
    \item $|N|$ is odd and every $G$-conjugacy class contained in $N$ has odd size.
  \end{enumerate}
    Then block domination induces a bijection between the set of real $2$-blocks of $G/N$ and the set of real $2$-blocks of $G$.
\end{thm}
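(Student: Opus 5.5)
The plan is to use the map $b\mapsto B$, where $B$ is the unique $2$-block of $G$ dominating the $2$-block $b$ of $G/N$. By \cite[Theorems 9.9, 9.10]{navarro98}, as used in the proof of Lemma~\ref{lem: basic}(1), this block exists and satisfies $\mathrm{Irr}(b)=\mathrm{Irr}(B)\cap\mathrm{Irr}(G/N)$. The proof then has three steps: the map sends real blocks to real blocks, it is injective on real blocks, and every real $2$-block of $G$ is hit.

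For the first step, if $b$ is real it contains a real-valued $\chi$, and its inflation lies in $B$, so $B$ is real.

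For injectivity, I would first show that, under either hypothesis, domination is injective on all $2$-blocks of $G/N$. In case (1), $N$ is central. Let $N_2$ be the Sylow $2$-subgroup of $N$. By \cite[Theorem 9.10]{navarro98}, each block of $G/N_2$ is dominated by exactly one block of $G$, and conversely each block of $G$ dominates exactly one block of $G/N_2$. Since $N/N_2$ is a central $2'$-group, each block of $G/N$ is a block of $G/N_2$ lying over the trivial character of $N/N_2$. Hence distinct blocks of $G/N$ are dominated by distinct blocks of $G$. In case (2), $N$ is a $2'$-group, and the blocks of $G/N$ correspond injectively to the blocks of $G$ covering $1_N$, again by \cite[Theorem 9.9]{navarro98}.

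It is cleaner, though, to argue injectivity on real blocks directly. Suppose $b\neq b'$ are real blocks of $G/N$ dominated by the same $B$. Then $\mathrm{Irr}(b)=\mathrm{Irr}(B)\cap\mathrm{Irr}(G/N)=\mathrm{Irr}(b')$, which forces $b=b'$. So injectivity follows from the displayed equality and needs nothing more.

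For surjectivity, let $B$ be a real $2$-block of $G$, and choose a real-valued $\chi\in\mathrm{Irr}(B)$, which exists by the characterization of real $2$-blocks. It suffices to show $\chi\in\mathrm{Irr}(G/N)$. Then $\chi$ lies in some block $b$ of $G/N$, so $b$ is real, and the block of $G$ dominating $b$ contains $\chi$ and hence equals $B$.

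In case (2) this is exactly Lemma~\ref{lem: all real in G/N}.

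Case (1) is the main obstacle, since a real $\chi$ need not be trivial on the central subgroup $N$. I would reduce modulo the odd and even parts of $N$. Write $N=N_2\times N_{2'}$.
\begin{itemize}
\item For $N_{2'}$: $\chi_{N_{2'}}=\chi(1)\lambda$ for a linear character $\lambda$, which is real because $\chi$ is. Since $\lambda$ has odd order, $\lambda=1$. So $\chi\in\mathrm{Irr}(G/N_{2'})$. Alternatively, Lemma~\ref{lem: all real in G/N} applies here, since central classes have size $1$.
\item For $N_2$: a central $2$-subgroup lies in the kernel of every Brauer character, so every block of $G$ dominates a block of $G/N_2$ with the same Brauer characters. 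By \cite[Theorem 9.10]{navarro98}, domination is then a bijection between the blocks of $G/N_2$ and those of $G$. In particular $B$ dominates a block $\bar b$ of $G/N_2$, so $\mathrm{Irr}(B)\cap\mathrm{Irr}(G/N_2)\neq\varnothing$.
\end{itemize}

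To see that $\bar b$ is real, I would use the fact that conjugation commutes with domination: $\overline{\bar b}$ is dominated by $\overline{B}=B$, and uniqueness then gives $\overline{\bar b}=\bar b$.

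Combining the two reductions, apply the $N_2$ step to the group $G/N_{2'}$ with its central subgroup $N/N_{2'}\cong N_2$. This produces a real block of $(G/N_{2'})/(N/N_{2'})\cong G/N$ that is dominated by $B$, which gives surjectivity. For the real block of $G/N_{2'}$ containing $\chi$ to be available here, one first observes that the block of $G/N_{2'}$ containing $\chi$ is real and is dominated by $B$. Note that in this route surjectivity does not require $\chi$ itself to be trivial on $N$; it only requires some real block of $G/N$ to be dominated by $B$.
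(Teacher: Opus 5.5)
Your proposal is correct and follows essentially the paper's route: case (2) rests on Lemma~\ref{lem: all real in G/N}, and case (1) splits $N=N_2\times N_{2'}$ and uses the domination bijection for central $2$-subgroups. The differences are minor. You dispose of $N_{2'}$ directly, since a real linear character of odd order is trivial, instead of reducing to case (2). You also transfer reality across the $N_2$-bijection by noting that complex conjugation commutes with domination and that $B$ dominates a unique block, whereas the paper reads reality off the equality $\mathrm{IBr}(b)=\mathrm{IBr}(B)$.
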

\begin{proof}
    {\bf Suppose that condition (2) holds}.
  Since every $G$-conjugacy class contained in the odd-order group $N$ has odd size, Lemma \ref{lem: all real in G/N} implies that all real-valued irreducible characters of $G$ belong to $\mathrm{Irr}(G/N)$. 
  Let $b$ be a real $2$-block of $G/N$, and let $B$ be the unique $2$-block of $G$ dominating $b$. 
  Then the assignment $b \mapsto B$ defines a map $f$ from the set of $2$-blocks of $G/N$
  to the set of $2$-blocks of $G$.
  As $N$ is of odd order, \cite[Theorem 9.9(c)]{navarro98} yields $\mathrm{Irr}(b)=\mathrm{Irr}(B)$. 
  In particular, $b$ is the unique $2$-block of $G/N$ dominated by $B$.
  Hence, the map $f$ is injective.

  Conversely, let $B$ be any real $2$-block of $G$. 
  By definition, there exists a real-valued character $\chi\in\mathrm{Irr}(B)$. 
  Note that $\chi\in\mathrm{Irr}(G/N)$, and so $\chi$ belongs to some real $2$-block $b$ of $G/N$. 
  It then follows that $B$ dominates $b$ (see \cite[Page 199]{navarro98}).
  Therefore, the map $f$ is surjective.

  Consequently, block domination defines a bijection between the set of real $2$-blocks of $G/N$ and the set of real $2$-blocks of $G$.

  {\bf Suppose that condition (1) holds}.
  We proceed by induction on $|N|$.
  Since $N$ is abelian, it decomposes as the direct product 
  of its Sylow $2$-subgroup and its Hall $2'$-subgroup. 
  By induction, we may assume that either $N$ is a $2$-group or a $2'$-group.

 If $N$ is a $2'$-group, then it has odd order. 
 Because $N \leq \mathrm{Z}(G)$, every $G$-conjugacy class contained in $N$ has size $1$, which is odd. Thus $N$ satisfies condition (2) and we are done.

  So, we may assume that $N$ is a 2-group.
 Let $b$ be a $2$-block of $G/N$, and let $B$ be the unique $2$-block of $G$ dominating $b$. 
  By \cite[Theorem 7.6]{navarro98}, the assignment $b \mapsto B$ defines a bijection between the $2$-blocks of $G/N$ and those of $G$, satisfying $\mathrm{IBr}(b) = \mathrm{IBr}(B)$.
Note that, by \cite[Theorems 3.33, 3.35]{navarro98}, a $2$-block is real if and only if it contains a real-valued irreducible $2$-Brauer character. It follows immediately that $b$ is real if and only if $B$ is real, which yields the desired bijection in this case.
\end{proof}

Let $G$ be a finite group.
Recall that $\chi\in \mathrm{Irr}(G)$ is said to have $p$-defect zero if $p\nmid |G|/\chi(1)$.
Notably, such a character never lies in the principal $p$-block of $G$ whenever $p\mid |G|$.
Recall further that the \emph{socle} of $G$ is defined as the product of all its minimal normal subgroups.
We say a finite group $A$ is \emph{almost simple} if $S \leq A \leq \mathrm{Aut}(S)$ for some finite non-abelian simple group $S$, in which case $S$ is the socle of $A$.

Let $\epsilon\in \{ \pm \}$.
We adopt unified notation for families of finite simple groups of Lie type:
we write $A_n^\epsilon(q)$, where $\epsilon=+$ corresponds to $A_n(q)\cong\mathrm{PSL}_{n+1}(q)$ and $\epsilon=-$ to ${}^{2}\!A_n(q)\cong\mathrm{PSU}_{n+1}(q)$;
analogous notation is used for the groups $D_{n}^{\epsilon}(q)$ and $E_{6}^{\epsilon}(q)$.
The proof of the following theorem largely adapts arguments from Marinelli and Tiep \cite[Section 4]{marinelli13}, combined with reasoning from Tiep \cite[Proposition 6.3]{tiep15}.

\begin{thm}\label{thm: almost simple}
   Let $A$ be an almost simple group with socle $S$.
   Assume that $S$ admits at least two distinct real $2$-blocks.
   Then 
    \[
  \begin{gathered}
     \text{$S$ has an irreducible character $\alpha\notin \operatorname{Irr}(B_0(S))$ such that there exist a subgroup $J$} \\
     \text{with $\mathrm{I}_{A}(\alpha)\leq J\leq A$ and a real-valued character $\beta\in \operatorname{Irr}(J)$ lying over $\alpha$.} 
   \end{gathered} \tag{$*$} 
   \]
\end{thm}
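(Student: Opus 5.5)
The plan is to exhibit, for every non-abelian simple $S$ not in the list of Theorem \ref{thm: urb}, a character $\alpha\in\mathrm{Irr}(S)\smallsetminus\mathrm{Irr}(B_0(S))$ that is either $\mathrm{Aut}(S)$-invariant and extends to a real-valued character of $A$, or is real-valued with a real-valued extension to its inertia group (then take $J=\mathrm{I}_A(\alpha)$). By Theorem \ref{thm: urb}, the hypothesis means exactly that $S\notin\{M_{11},M_{22},M_{23},M_{24},\mathrm{PSL}_3(3),\mathrm{PSU}_3(3)\}$. The cleanest candidates are characters of $2$-defect zero, since these never lie in $B_0(S)$, and semisimple characters $\chi_s$ attached to odd-order (more generally $2'$-) semisimple elements $s\neq 1$, which lie outside $\mathcal{E}_2(S,1)\supseteq\mathrm{Irr}(B_0(S))$ by Theorem \ref{thm: blocks of lie type}. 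I would split the proof according to the classification: alternating groups, sporadic groups (and $\mathrm{Alt}_6$, ${}^2F_4(2)'$, etc.), and groups of Lie type in characteristic $2$ or in odd characteristic.

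For sporadic and small exceptional cases I would inspect the ATLAS. Pick a real-valued $\alpha\notin B_0(S)$, typically a $2$-defect zero character. If $[\mathrm{Out}(S)]\le 2$ and $\alpha$ is $A$-invariant, I would choose $\beta$ as a real extension. If $\alpha$ is not invariant, then $\mathrm{I}_A(\alpha)=S$ and $\beta=\alpha$ works. For alternating groups $\mathsf{A}_n$ with $n\ge 7$, I would use a $2$-core partition $\lambda$ of a suitable size, or more generally a character of $\mathsf{S}_n$ in a non-principal $2$-block. Characters of symmetric groups are rational, so a non-self-conjugate $\lambda$ restricts irreducibly to a rational $\alpha$ extending to $\mathsf{S}_n$. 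Its $2$-core is nontrivial, so it lies outside $B_0$. For $n$ where no nontrivial $2$-core fits, I would take $\lambda$ with $2$-core of size $1$ or $3$ and of the right parity.

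For Lie type in characteristic $2$, I would take the Steinberg character $\mathrm{St}$. It has $2$-defect zero, is rational, and extends to a rational character of $\mathrm{Aut}(S)$ by Feit and Schmid, so $J=A$ works. For odd characteristic $\ell$, I would follow Marinelli--Tiep and Tiep \cite[Proposition 6.3]{tiep15}. Choose a regular-ish semisimple element $s\in G^*$ of odd order $r$, with $r$ a primitive prime divisor of $q^e-1$ for suitable $e$, such that $s$ is real in $G^*$ (conjugate to $s^{-1}$), lies in $[G^*,G^*]$ so that $\chi_s$ restricts irreducibly to $S$, and whose class is stable under field and graph automorphisms up to inversion. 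Then $\alpha=(\chi_s)_S$ is real-valued and lies outside $\mathcal{E}_2(G,1)$, hence outside $B_0(S)$.

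The key step, and the main obstacle, is producing the real extension $\beta$ to $J=\mathrm{I}_A(\alpha)$. Diagonal automorphisms are handled through $G$, where $\chi_s$ is rational-ish on $\mathrm{C}$. Semisimple characters are uniquely determined by their degree within $\mathcal{E}(G,s)$, and field automorphisms fixing the class of $s$ fix $\chi_s$. So $\chi_s$ extends to $G\rtimes\langle\phi\rangle$, and I would argue, as in \cite[Section 4]{marinelli13}, that some extension is real. If $\beta$ is an extension, then $\overline\beta=\beta\mu$ for a linear $\mu$ of the cyclic quotient. When that quotient has odd order, Lemma \ref{lem: real chars}(2) gives a real extension directly. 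For the $2$-part, I would use Gallagher plus a parity argument on $\mu$. Where this fails, especially for $\mathrm{PSL}_n(q)$ with graph automorphisms, I would instead pick $s$ such that the graph automorphism maps $s$ to $s^{-1}$, making $\alpha$ non-invariant under it and shrinking $J$. The low-rank exceptions left over ($\mathrm{PSL}_2(q)$, $\mathrm{PSL}_3(q)$, $\mathrm{PSU}_3(q)$ with small $q$) I would check by hand or with GAP character tables.
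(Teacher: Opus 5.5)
Your overall architecture matches the paper's. You split cases via the classification, use the Steinberg character and Feit's extension theorem \cite{feit93} in characteristic $2$, run ATLAS/$\mathsf{GAP}$ checks for sporadic and small groups, pass to $\mathsf{S}_n$ for $\mathsf{A}_n$, and use semisimple characters following \cite{marinelli13} and \cite{tiep15} in odd characteristic. Your $2$-core argument works, though the paper simply uses that every character of $\mathsf{S}_n$ is real. The gap is in the step you yourself call the main obstacle. Your plan only admits $\beta$ as a real \emph{extension} of a real $\alpha$, and neither device you offer for obtaining one works.

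First, the ``Gallagher plus parity'' step fails in general. If $\overline{\beta}=\beta\mu$, the real extensions of $\alpha$ are exactly the $\beta\lambda$ with $\lambda$ linear on $\mathrm{I}_A(\alpha)/S$ and $\lambda^2=\mu$, and $\mu$ need not be a square. For instance, the rational degree-$10$ character of $\mathsf{A}_6$ is $M_{10}$-invariant, but its two extensions to $M_{10}$ are complex conjugate to each other (values $\pm i\sqrt{2}$ on elements of order $8$). So real extendibility is genuine input, coming from Feit's theorem, from explicit constructions, or from computation.

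Second, your fallback for the graph automorphism is self-defeating. If $\tau$ sends the class of $s$ to that of $s^{-1}\not\sim s$, then $\alpha$ is not real. A real $\beta\in\mathrm{Irr}(J|\alpha)$ also lies over $\overline{\alpha}$, so $\alpha$ and $\overline{\alpha}$ would have to be $J$-conjugate. Hence $J=\mathrm{I}_A(\alpha)$ is impossible (there $\beta_S=e\alpha$), and shrinking $J$ can never help.

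The missing idea is to use the freedom in $(*)$ in the opposite direction. Take $g\in A$ with $\alpha^g=\overline{\alpha}$, for example $g$ inducing the graph automorphism. Set $J=\mathrm{I}_A(\alpha)\langle g\rangle$; this is a group, since $g$ normalizes $\mathrm{I}_A(\alpha)=\mathrm{I}_A(\overline{\alpha})$. Then produce $\psi\in\mathrm{Irr}(\mathrm{I}_A(\alpha)|\alpha)$ with $\psi^g=\overline{\psi}$ and put $\beta=\psi^J$. By Clifford $\beta$ is irreducible, and $\overline{\beta}=(\psi^g)^J=\beta$. This is why $(*)$, inherited from \cite{marinelli13}, is phrased with an intermediate subgroup $J$. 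It is also exactly the mechanism of Theorem~\ref{thm: M11 M23}(2) in this paper.

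Smaller corrections to your odd-characteristic setup:
\begin{enumerate}[\rm (i)]
\item ``$s\in[G^*,G^*]$'' is not what makes $(\chi_s)_S$ irreducible, since $G^*$ is simply connected here. What you need is $sz\not\sim s$ for $1\neq z\in\mathrm{Z}(G^*)$, which holds, e.g.\ when $\gcd(o(s),|\mathrm{Z}(G^*)|)=1$.
\item To get $\alpha\notin\mathrm{Irr}(B_0(S))$, you must place all of $\mathrm{Irr}(G|\alpha)=\{\chi_{sz}:z\in\mathrm{Z}(G^*)\}$ outside $\mathcal{E}_2(G,1)$, not just $\chi_s$, because $G/S$ need not be a $2$-group.
\item $\mathrm{I}_A(\alpha)/S$ need not be cyclic (e.g.\ $D_4(q)$ with triality), so the ``cyclic quotient'' argument does not cover all cases.
\item You can dispose of the case $[A:S]$ odd at the outset via Lemma~\ref{lem: real chars}(2) with $J=A$, as the paper does.
\end{enumerate}
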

\begin{proof}
  Since $S$ admits at least two distinct real $2$-blocks, we may assume by Lemma \ref{lem: real chars}(2) that $2$ divides the index $[A:S]$.
  Moreover, Theorem \ref{thm: urb} excludes the cases $S\in \{ M_{22}, \mathrm{PSL}_3(3), \mathrm{PSU}_3(3) \}$. 
  Now, we proceed with the proof by establishing several claims.

  \noindent{\bf Claim 1}. If $S$ possesses an irreducible character $\alpha\notin \mathrm{Irr}(B_0(S))$
that extends to a real-valued character $\gamma$ of $\mathrm{I}_{\mathrm{Aut}(S)}(\alpha)$,
then $S$ has property ($*$).
In particular, finite simple groups of Lie type in even characteristic other than ${}^{2}\!F_4(2)'$ have property ($*$).

Let $T = \mathrm{I}_{A}(\alpha)$. 
For any subgroup $J$ such that $T\leq J\leq A$, Clifford's theorem and Lemma \ref{lem: res and ind} yield that the character $\beta:=(\gamma_T)^{J}$ is a real-valued irreducible character of $J$ lying over $\alpha$.
Now suppose $S$ is a finite simple group of Lie type in even characteristic other than ${}^{2}\!F_4(2)'$.
By the main result of \cite{feit93}, its Steinberg character $\alpha$, which lies outside $B_0(S)$, extends to a rational-valued (and hence real-valued) character of $\mathrm{Aut}(S)$. 
We are therefore done by the preceding discussion.

\noindent{\bf Claim 2}. Sporadic simple groups and groups $\mathsf{A}_5$, $\mathsf{A}_6,D_4(3)$ and ${}^{2}\!F_4(2)'$ have property ($*$).

 Since $2\mid [A:S]$, using $\mathsf{GAP}$ \cite{gap},
 we have either $S\in \{ M_{12}, J_2, Suz, HS, McL, He, Fi_{22}, Fi_{24}', HN, O'N, J_3 \}$ 
 or $S \in \{ \mathsf{A}_5, \mathsf{A}_6,D_4(3),{}^{2}\!F_4(2)' \} $.
 We then verify via $\mathsf{GAP}$ \cite{gap} that $S$ admits an irreducible character $\alpha\notin \mathrm{Irr}(B_0(S))$
 that extends to a real-valued character of $\mathrm{I}_{\mathrm{Aut}(S)}(\alpha)$, meaning we are done by Claim 1.
 Indeed, if 
 $$S\in \{ He, Fi_{22}, Fi_{24}', HN, O'N, J_3, \mathsf{A}_5, \mathsf{A}_6, D_4(3),{}^{2}\!F_4(2)' \},$$
  we may choose such a character $\alpha$ to have $2$-defect zero, with respective degrees 21504, 1441792, 197813862400, 3424256, 175616, 1920, 4, 8, 716800, or 2048;
 if $S\in \{ M_{12}, J_2, Suz, HS, McL \}$,
 we may choose such a character $\alpha$ with respective degrees $144$, $160$, $66560$, $1408$, or $3520$.

 \noindent{\bf Claim 3}. The groups $\mathsf{A}_n$, for $n>6$, have property ($*$).

Note that $|\mathrm{Aut}(S)/S| = 2$, and so $A = \mathrm{Aut}(S) \cong \mathsf{S}_n$.
Note also that every irreducible character of $A$ is real-valued.
By Theorem \ref{thm: urb}, there exists a real-valued character $\alpha \in \mathrm{Irr}(S) \smallsetminus \mathrm{Irr}(B_0(S))$.
Since any character $\beta\in \mathrm{Irr}(A|\alpha)$ is also real-valued,
 we may therefore take $J=A$ to complete the claim.



\noindent{\bf Claim 4}. Finite simple groups of Lie type in odd characteristic $\ell$ have property ($*$).

For the remainder of the proof, 
we may assume that $S$ is neither isomorphic to any of the simple groups of Lie type listed in Claim 2, nor to $\mathrm{PSL}_{3}(3)$ or $\mathrm{PSU}_{3}(3)$.
Following the setup from Subsection \ref{sec: 2.3},
we set $S = [G, G]$, where $G := \mathbf{G}^F$ for a simple algebraic group $\mathbf{G}$ of adjoint type over the algebraic closure $\overline{\mathbb{F}_\ell}$ and a Steinberg endomorphism $F$ of $\mathbf{G}$.
Let $(\mathbf{G}^{*}, F^{*})$ be in duality with $(\mathbf{G}, F)$, and set $G^*:=\mathbf{G}^{*F^*}$.

It suffices to find a character $\alpha\in \mathrm{Irr}(S)$
that is the restriction to $S$ of a semisimple character $\chi_s\in \mathrm{Irr}(G)$ corresponding to an odd-order non-trivial semisimple element $s\in G^*$, with $\operatorname{gcd}(o(s),|\mathrm{Z}(G^*)|)=1$,
such that there exist a subgroup $J$ with $\mathrm{I}_{A}(\alpha)\leq J\leq A$ and a real-valued character $\beta\in \mathrm{Irr}(J)$ lying over $\alpha$.
Indeed, since the non-trivial semisimple element $s$ has odd order and $\gcd(o(s),|\mathrm{Z}(G^*)|)=1$, $sz$ is not a $2$-element for any $z\in\mathrm{Z}(G^*)$; 
this implies $\chi_{sz}\notin\mathcal{E}_2(G,1)$, and Theorem~\ref{thm: blocks of lie type} therefore forces that no semisimple character $\chi_{sz}$ lies in $\mathrm{Irr}(B_0(G))$ for all $z\in\mathrm{Z}(G^*)$;
as shown in the proof of \cite[Proposition~4.3]{marinelli13}, we have 
$$\mathrm{Irr}(G|\alpha)=\{\chi_{sz}:z\in\mathrm{Z}(G^*)\};$$
Lemma~\ref{lem: basics on principal blocks}(3) then guarantees that $\alpha\notin\mathrm{Irr}(B_0(S))$.

We first assume that $S$ is not one of the following groups: $A_n^\epsilon(q) $ with $ n\geq 2$, $D_{2n+1}^\epsilon(q)$ with $n\geq 2$, or $E_6^{\epsilon}(q)$.
If $S\cong D_4(q)$, we adapt the argument from \cite[Proposition 4.9]{marinelli13}.
For all other groups in this family, we follow the proofs of \cite[Propositions 4.4,~4.5]{marinelli13}.

We next assume that $S$ is one of the following groups: $A_n^\epsilon(q) $ with $ n\geq 2$, $D_{2n+1}^\epsilon(q)$ with $n\geq 2$, or $E_6^{\epsilon}(q)$.
Let $\tau$ denote the outer automorphism of $S$ lifting the graph automorphism of $\mathbf{G}$ of order 2,
and observe that $\tau$ is the unique involution in $\mathrm{Aut}(S)/S$.
If $S\cong\mathrm{PSU}_n(q)$ with $2\mid n$ and $\tau\in GA/S$,
then we follow the proof of \cite[Proposition 6.3]{tiep15}.
If $S\cong \mathrm{PSL}_3(q)$ where $q=\ell^f\neq 3$ with $2\nmid f$, or $S\cong \mathrm{PSU}_3(q)$ with $q\neq 3$,
noting that $\tau\in GA/S$ because $2\mid [A:S]$,
we then use Cases IIa and IIb from the proof of \cite[Proposition 4.7]{marinelli13}.
For the remaining groups in this family, 
we follow the argument of \cite[Proposition 4.7]{marinelli13}.
\end{proof}

\section{Proof of the main results}

We first establish the following theorem, which enables us to analyze the components of finite groups possessing a unique real $2$-block.

\begin{thm}\label{thm: reduction}
   Let $G$ be a finite group with a unique real $2$-block.
   If $L$ is a component of $G$,
   then $L$ also has a unique real $2$-block.
\end{thm}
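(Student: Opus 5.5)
The plan is to argue by contradiction. Suppose $L$ is a component of $G$ with at least two real $2$-blocks, and set $S=L/\mathrm{Z}(L)$. By Theorem~\ref{thm: urb}, $S\notin\{M_{11},M_{22},M_{23},M_{24},\mathrm{PSL}_3(3),\mathrm{PSU}_3(3)\}$, so $S$ itself has at least two real $2$-blocks. The goal is to build a real-valued $\chi\in\mathrm{Irr}(G)$ lying outside $B_0(G)$, which contradicts the hypothesis. The first reductions are as follows.

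\begin{enumerate}[(a)]
\item Let $M=L^G=L_1\circ\cdots\circ L_t$ be the normal closure of $L$, where $L_i=L^{x_i}$ are the distinct $G$-conjugates of $L$. Then $M$ is a central product of quasi-simple groups, and $M/\mathrm{Z}(M)\cong S^t$ is a minimal normal subgroup of $G/\mathrm{Z}(M)$.
\item Let $A=\mathrm{N}_G(L)/\mathrm{C}_G(L)$. This is an almost simple group whose socle is $L\mathrm{C}_G(L)/\mathrm{C}_G(L)\cong S$.
\item Apply Theorem~\ref{thm: almost simple} to $A$. It gives $\alpha\in\mathrm{Irr}(S)\smallsetminus\mathrm{Irr}(B_0(S))$, a subgroup $J$ with $\mathrm{I}_A(\alpha)\le J\le A$, and a real-valued $\beta\in\mathrm{Irr}(J)$ lying over $\alpha$.
\end{enumerate}

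Inflate $\alpha$ to $L$. We have $\mathrm{Irr}(B_0(L))\cap\mathrm{Irr}(L/\mathrm{Z}(L))=\mathrm{Irr}(B_0(S))$; this is the domination statement \cite[Theorems 9.9, 9.10]{navarro98} already used in Lemma~\ref{lem: basic}(1). Hence the inflated $\alpha$ is not in $B_0(L)$. Now form $\theta=\alpha^{x_1}\circ\cdots\circ\alpha^{x_t}\in\mathrm{Irr}(M)$, which is trivial on $\mathrm{Z}(M)$. By Lemma~\ref{lem: cent prod of chars and blocks}, applied to $M/\mathrm{Z}(M)$ and inflated back, $\theta\notin\mathrm{Irr}(B_0(M))$. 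By Lemma~\ref{lem: basics on principal blocks}(2), any $\chi\in\mathrm{Irr}(G|\theta)$ then lies outside $B_0(G)$. So it suffices to find a real-valued irreducible character of $G$ over $\theta$.

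The construction of that real character is the core step. Take the preimage $\tilde J\le \mathrm{N}_G(L)$ of $J$, and view $\beta$ as a character of $\tilde J$. Then $\beta$ lies over $\alpha\times 1_{\mathrm{C}_G(L)}$ on $L\mathrm{C}_G(L)$, and $\tilde J$ contains the stabiliser of $\alpha$ in $\mathrm{N}_G(L)$. Let $I=\mathrm{I}_G(\theta)$. Since the $L_i$ are permuted transitively, $\mathrm{N}_I(L)$ has index $t'$ in $I$, where $t'$ is the number of $I$-conjugates of $L$. I would then mimic Lemma~\ref{lem: tensor induction}, replacing $G$ by $I$ and $\psi$ by the restriction of $\beta$ to $\tilde J\cap I$, or a suitable real constituent of that restriction. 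The resulting tensor-induced character $\psi^{\otimes I}$ is real-valued, because complex conjugation commutes with tensor induction as in the $\sigma$-part of Lemma~\ref{lem: tensor induction}. Its restriction to $M$ is a multiple of $\theta$. A real irreducible constituent of it over $\theta$ then induces irreducibly to $G$ by the Clifford correspondence, and the induced character is real by Lemma~\ref{lem: res and ind}.

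I expect the main obstacle to be exactly this passage from ``$\beta$ real and lying over $\alpha$'' to ``some irreducible character over $\theta$ is real.'' Theorem~\ref{thm: almost simple} gives only a character lying over $\alpha$, not an extension of it. So one must check that tensor induction still yields a character whose irreducible constituents over $\theta$ include a conjugation-stable one. My first attempt would be the following. Choose $J$ minimal with respect to containing $\mathrm{I}_A(\alpha)$, or better take $J=\mathrm{I}_A(\alpha)$ itself whenever the proof of Theorem~\ref{thm: almost simple} actually produces an extension, which it does in Claims~1--3. Then $\psi$ genuinely extends $\alpha\times1$ and Lemma~\ref{lem: tensor induction} applies verbatim. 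In the remaining Lie-type cases, I would handle the gap by a parity/Clifford argument: a real character whose restriction is a sum of $\theta$-conjugates, with the orbit structure controlled, must have a real constituent over $\theta$.
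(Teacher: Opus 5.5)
\textbf{There is a genuine gap, in the core step.} Your set-up (a)--(c) and the block-theoretic part are fine. The problem comes from your choice of the diagonal character $\theta=\alpha^{x_1}\circ\cdots\circ\alpha^{x_t}$.

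\begin{itemize}
\item Every element of $\mathrm{N}_I(L)$ fixes $\alpha$, so the image of $\mathrm{N}_I(L)$ in $A$ lies in $\mathrm{I}_A(\alpha)$. Tensor induction from $\mathrm{N}_I(L)$ therefore needs a \emph{real} character over $\alpha$ on a subgroup of $\mathrm{I}_A(\alpha)$.
\item Property $(*)$ does not provide this. It gives a real $\beta$ on an \emph{overgroup} $J\supseteq\mathrm{I}_A(\alpha)$, and the restriction of a real irreducible character need not have any real irreducible constituent.
\item This is exactly why $(*)$ is phrased with an overgroup $J$. The character $\alpha$ need not be real, and $\beta$ may be real only because inducing from $\mathrm{I}_A(\alpha)$ to $J$ fuses $\alpha$ with $\overline{\alpha}$ via an outer automorphism. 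In that situation the Clifford correspondent of $\beta$ over $\alpha$ is not real.
\item Already for $t=1$, $G=A$, $\mathrm{I}_A(\alpha)=S$ and $\alpha\neq\overline\alpha$, your procedure would need $\alpha$ itself to be real.
\item In general, your last step only produces real characters of $G$ whose Clifford correspondent in $I$ is real. But $\gamma^G$ is real as soon as $\overline\gamma$ is $G$-conjugate to $\gamma$. The parity argument you sketch does not bridge this.
\item Separately, $I$ need not be transitive on the $L_i$. Then $\psi^{\otimes I}$ is trivial on the $L_j$ outside the $I$-orbit of $L$, so its restriction to $M$ is not a multiple of $\theta$. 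You would need a product over $I$-orbits, as in the proof of Theorem~\ref{thm: M11 M23}(2), together with real characters over $\alpha$ on further subgroups of $\mathrm{I}_A(\alpha)$.
\end{itemize}

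\textbf{The missing idea is to put $\alpha$ on a single factor.}
\begin{itemize}
\item Pass to $\overline G=G/\mathrm{Z}(M)$ and set $\overline H=\mathrm{N}_{\overline G}(\overline L)$ and $\overline C=\mathrm{C}_{\overline G}(\overline L)$, so that $A\cong\overline H/\overline C$.
\item Take $\theta=(\alpha\times 1_{\overline C})_{\overline M}=\alpha\times 1_{\overline{L_2}}\times\cdots\times 1_{\overline{L_t}}$.
\item Since $\alpha\neq 1$, any element fixing $\theta$ normalizes $\overline L$. Hence $\mathrm{I}_{\overline G}(\theta)=\mathrm{I}_{\overline H}(\alpha\times 1_{\overline C})\le\overline J$, where $\overline J$ is the preimage of $J$.
\item The inflation of $\beta$ to $\overline J$ lies over $\theta$, so $\beta^{\overline G}$ is irreducible by the Clifford correspondence, and it is real because $\beta$ is.
\item It lies over $\theta\notin\mathrm{Irr}(B_0(\overline M))$ by Lemma~\ref{lem: cent prod of chars and blocks}. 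Inflated to $G$, it is therefore a real character outside $B_0(G)$, which is the contradiction.
\end{itemize}
This is the paper's argument: it needs no extension of $\alpha$ and no tensor induction. The paper also first reduces to $\mathrm{O}_{2'}(G)=1$ so that $\mathrm{Z}(M)$ is a $2$-group. Your domination remark for central subgroups makes that reduction unnecessary.
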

\begin{proof}
  Let $G$ be a counterexample of minimal order.
  Then $G$ admits a component $L$
  with at least two distinct real $2$-blocks.
  By Theorem \ref{thm: bij real 2-blocks}, $L/\mathrm{Z}(L)$ also has at least two distinct real $2$-blocks.
  Note that $G/\mathrm{O}_{2'}(G)$ has a unique real $2$-block by Lemma \ref{lem: basic}(1),
  and that $L\mathrm{O}_{2'}(G)/\mathrm{O}_{2'}(G)$ is a component of $G/\mathrm{O}_{2'}(G)$.
  The minimality of $G$ therefore forces $\mathrm{O}_{2'}(G) = 1$.
  
  Let $N=\langle L^{x}:x\in G\rangle$ be the normal closure of $L$ in $G$, set $Z = \mathrm{Z}(N)$, and write $\overline{G} = G/Z$.
  Then $Z$ is a $2$-group because $\mathrm{O}_{2'}(G)=1$.
  Moreover, by \cite[6.5.3, 1.6.7, 1.6.3(b)]{kurzweil04},
  \[
\overline{N} = \overline{L_1} \times \overline{L_2} \times \cdots \times \overline{L_t}
\]
  is a minimal normal subgroup of $\overline{G}$,
  where $\overline{L_i}=\overline{L^{x_i}}$ are non-abelian simple groups with suitable $x_i\in G$.
  Without loss of generality, we may assume that $x_1=1$.
  Since $Z$ is a central $2$-subgroup of $N$,
  \cite[Theorem 9.10]{navarro98}
  implies that
  $$\mathrm{Irr}(B_0(N))\cap \mathrm{Irr}(\overline{N})=\mathrm{Irr}(B_0(\overline{N})).$$
  Now set $\overline{H}=\mathrm{N}_{\overline{G}}(\overline{L})$ and $\overline{C}=\mathrm{C}_{\overline{G}}(\overline{L})$.
  Then $\overline{H}/\overline{C}$ is an almost simple group with socle $\overline{LC}/\overline{C} \cong L/\mathrm{Z}(L)$, which we have already noted has at least two distinct real $2$-blocks.
  Applying Theorem \ref{thm: almost simple} to the almost simple group $\overline{H}/\overline{C}$ and its socle $\overline{LC}/\overline{C}$,
  we deduce that $\overline{LC}=\overline{L}\times \overline{C}$ has an irreducible character $\alpha\times 1_{\overline{C}}\notin \mathrm{Irr}(B_0(\overline{LC}))$
  such that 
  there exist a subgroup $\overline{J}$ with $\mathrm{I}_{\overline{H}}(\alpha\times 1_{\overline{C}})\leq \overline{J}\leq \overline{H}$ 
  and a real-valued character $\beta\in \mathrm{Irr}(\overline{J})$ lying over $\alpha\times 1_{\overline{C}}$.
  In particular, $\alpha$ does not lie in $B_0(\overline{L})$.
  Let $\theta=(\alpha\times 1_{\overline{C}})_{\overline{N}}$.
  Then $\theta$ is an irreducible character of $\overline{N}$
  such that
  $\mathrm{I}_{\overline{G}}(\theta)=\mathrm{I}_{\overline{H}}(\alpha)=\mathrm{I}_{\overline{H}}(\alpha\times 1_{\overline{C}})$.
  By Clifford's theorem, the induced character $\chi:=\beta^{\overline{G}}$ is irreducible.
  Since $\beta$ is real-valued, $\chi$ is also real-valued.
  Note that $G$ has a unique real $2$-block, and so $\chi\in \mathrm{Irr}(B_0(G))$.
  As $\theta$ is an irreducible constituent of $\chi_N$, $\theta$ must lie in $\mathrm{Irr}(B_0(N))\cap \mathrm{Irr}(\overline{N})=\mathrm{Irr}(B_0(\overline{N}))$. 
  However, $\alpha$ does not lie in $B_0(\overline{L})$, so $\theta$ cannot lie in $B_0(\overline{N})$ by Lemma \ref{lem: cent prod of chars and blocks}, a contradiction.
\end{proof}

Note that a finite group $G$ with a unique $p$-block 
must satisfy $\mathrm{O}_{p'}(G)=1$.
However, this is not the case for a finite group with a unique real $2$-block.
For instance, 
the finite quasi-simple group
$G=3.M_{22}$ has a unique real $2$-block,
but $\mathrm{O}_{2'}(G)>1$.
So, in the next two results,
we deal with the properties of $\mathrm{O}_{2'}(G)$
for finite groups $G$ with a unique real $2$-block.

\begin{lem}\label{lem: conj class in N}
Let $G$ be a finite group admitting a normal subgroup $N$ of odd order.
 Assume that $G$ has exactly one real $2$-block. 
Then all $G$-conjugacy classes contained in $N$ have odd size.
\end{lem}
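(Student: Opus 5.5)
The plan is to translate the hypothesis into a statement about real-valued characters, and then obtain a contradiction by counting real classes of $G$ and of $G/N$.

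\textbf{Step 1: reformulate the hypothesis.} Since $N$ has odd order, $\mathrm{Irr}(B_0(N))=\{1_N\}$. By Lemma~\ref{lem: basics on principal blocks}(2), every $\chi\in\mathrm{Irr}(B_0(G))$ has only $1_N$ as constituent of $\chi_N$, so $N\leq\ker\chi$. Now assume $G$ has a unique real $2$-block. Then every real-valued $\chi\in\mathrm{Irr}(G)$ lies in $B_0(G)$, and therefore lies in $\mathrm{Irr}(G/N)$. Hence the real-valued irreducible characters of $G$ are exactly those of $G/N$. Since the number of real-valued irreducible characters equals the number of real conjugacy classes, $G$ and $G/N$ have the same number of real classes.

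\textbf{Step 2: a nontrivial real class inside $N$.} Suppose, for a contradiction, that some $x\in N$ has $|x^G|$ even. Then $\mathrm{C}_G(x)$ contains no Sylow $2$-subgroup of $G$, so a Sylow $2$-subgroup $P$ of $G$ does not centralize $N$. Pick $g\in P$ acting nontrivially on $N$. The automorphism of $N$ induced by $g$ has $2$-power order, so a suitable power $t$ of $g$ induces an automorphism of order exactly $2$. By coprime action on the odd-order group $N$, every element of $N$ can be written as $c\,y$ with $c\in\mathrm{C}_N(t)$ and $y$ inverted by $t$. Because $t$ acts nontrivially, some $1\neq y\in N$ satisfies $y^t=y^{-1}$. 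Thus $y^G$ is a nontrivial real $G$-class contained in $N$.

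\textbf{Step 3: compare real classes of $G$ and $G/N$.} Consider the map $z^G\mapsto (zN)^{G/N}$ from real classes of $G$ to real classes of $G/N$. I will show it is surjective: every real element of $G/N$ lifts to a real element of $G$. Then both $1^G$ and $y^G$ map to the trivial class, so $G$ has strictly more real classes than $G/N$, contradicting Step 1. This proves that every $G$-class in $N$ has odd size.

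\textbf{Main obstacle: the lifting in Step 3.} Let $gN$ be inverted by $hN$. Replacing $h$ by a suitable odd power, we may take $hN$ to be a $2$-element, and then choose $h$ itself to be a $2$-element. Work in $K=N\langle g,h\rangle$ and look at the coset $gN$. The element $h$ acts on $gN$ by $z\mapsto (z^{-1})^h$, which is a permutation of order dividing a power of $2$. The set $gN$ splits into $N$-classes, and $\langle h\rangle$ permutes the $K$-classes meeting $gN$. The aim is to show that some element of $gN$ is inverted by a conjugate of $h$; for this I would use a fixed-point or counting argument, since $|N|$ is odd and orbits of a $2$-group on an odd-size set have fixed points. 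This is the standard fact that real elements lift modulo odd-order normal subgroups. If the direct argument becomes messy, I would instead cite this known result (Gow; also in Navarro's treatment of real classes). Alternatively, I would rerun the $\Gamma=\langle\tau\rangle\times G$ orbit count from Lemma~\ref{lem: all real in G/N} applied to the cosets of $N$.
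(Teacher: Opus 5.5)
Your argument is correct, but it takes a genuinely different route from the paper. The paper's proof is essentially one line resting on block theory. By the Gow--Murray lemma it cites, $x^G$ is a defect class of some real $2$-block for each $x\in N$. Uniqueness forces that block to be $B_0(G)$, whose defect groups are Sylow, so $\mathrm{C}_G(x)$ contains a Sylow $2$-subgroup of $G$.

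You instead use only the equality (number of real-valued irreducible characters) $=$ (number of real classes), Lemma~\ref{lem: basics on principal blocks}(2), and elementary group theory:
\begin{itemize}
\item Step~1 equates the numbers of real classes of $G$ and $G/N$.
\item Step~2 produces a nontrivial real class inside $N$. This can be shortened: if $t$ moves $n\in N$, then $y=n^{-1}n^{t}\neq 1$ and $y^t=y^{-1}$, so the factorisation $N=\mathrm{C}_N(t)I$ is unnecessary.
\item Step~3 lifts real classes from $G/N$ to $G$.
\end{itemize}

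The lifting you flag as the main obstacle is already settled by the ingredients you list. You should assemble it explicitly rather than fall back on a citation. The map $\phi(z)=(z^{-1})^h$ is a permutation of $gN$, and $\phi^2$ is conjugation by $h^2$. So $\langle\phi\rangle$ is a $2$-group acting on the set $gN$, which has odd size $|N|$, and it therefore has a fixed point $z$. The condition $\phi(z)=z$ says exactly $z^h=z^{-1}$. You do not need $K$, $N$-classes, or conjugates of $h$.

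Your route is self-contained. It also uses the hypothesis only through its consequence that every real-valued irreducible character of $G$ has $N$ in its kernel. It therefore proves the exact converse of Lemma~\ref{lem: all real in G/N}. Together they show that, for $N\unlhd G$ of odd order, every real-valued irreducible character of $G$ lies in $\mathrm{Irr}(G/N)$ if and only if every $G$-class in $N$ has odd size. The paper's route gains brevity but depends on the external defect-class result of Gow and Murray.
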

\begin{proof}
   Let $x$ be an element of $N$, and let $P\in \mathrm{Syl}_{2}(\mathrm{C}_{G}(x))$.
   Then $x^G$ is a defect class for some real 2-block $B$ of $G$ by \cite[Lemma 5.8]{gow00}.
   Since $G$ has a unique real $2$-block, $B=B_0(G)$ and $P$ is a Sylow $2$-subgroup of $G$.
   So, $|x^{G}|=[G:\mathrm{C}_{G}(x)]$ is odd.
\end{proof}

\begin{lem}\label{lem: G/N unique real block, conjugacy size odd}
  If $G$ is a finite group,
  then $G$ has a unique real $2$-block 
  if and only if $G/\mathrm{O}_{2'}(G)$ has a unique real $2$-block,
  and every $G$-conjugacy class in $\mathrm{O}_{2'}(G)$ has odd size.
\end{lem}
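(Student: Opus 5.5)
Set $N=\mathrm{O}_{2'}(G)$. We prove the two implications separately, and each reduces quickly to results already established.

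For the forward direction, assume $G$ has a unique real $2$-block. Because $N$ is a normal subgroup of odd order, Lemma \ref{lem: basic}(1) (the case $p\nmid |N|$) gives that $G/N$ has a unique real $2$-block. Since $|N|$ is odd, Lemma \ref{lem: conj class in N} applied to $N$ gives that every $G$-conjugacy class contained in $N$ has odd size.

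For the reverse direction, assume $G/N$ has a unique real $2$-block and that every $G$-conjugacy class in $N$ has odd size. Then $N$ satisfies condition (2) of Theorem \ref{thm: bij real 2-blocks}. Block domination therefore gives a bijection between the real $2$-blocks of $G/N$ and those of $G$. The former set has exactly one element, so the latter does too, and $G$ has a unique real $2$-block.

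Alternatively, one can argue directly with Lemma \ref{lem: all real in G/N}: every real-valued $\chi\in\mathrm{Irr}(G)$ lies in $\mathrm{Irr}(G/N)$. Such a $\chi$ then lies in $B_0(G/N)$, and since $\mathrm{Irr}(B_0(G/N))\subseteq \mathrm{Irr}(B_0(G))$ by Lemma \ref{lem: basics on principal blocks}(1), it lies in $B_0(G)$. Because a $2$-block is real exactly when it contains a real-valued irreducible character, $B_0(G)$ is the only real $2$-block.

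No step is a real obstacle, since the work is done in Lemma \ref{lem: all real in G/N}, Theorem \ref{thm: bij real 2-blocks} and Lemma \ref{lem: conj class in N}. The one point to check is that the hypotheses match: $N=\mathrm{O}_{2'}(G)$ has odd order, as all three results require.
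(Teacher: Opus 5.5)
Your proof is correct and matches the paper's argument: the forward direction uses Lemma \ref{lem: basic}(1) and Lemma \ref{lem: conj class in N}, and the converse uses condition (2) of Theorem \ref{thm: bij real 2-blocks}. Your alternative converse via Lemma \ref{lem: all real in G/N} and Lemma \ref{lem: basics on principal blocks}(1) is also valid; it just unpacks the surjectivity half of that theorem's proof.
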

\begin{proof}
If $G$ admits only one real $2$-block, then the claim follows from Lemmas \ref{lem: basic}(1) and \ref{lem: conj class in N}.

Suppose $G/\mathrm{O}_{2'}(G)$ has a unique real $2$-block and all $G$-conjugacy classes in $\mathrm{O}_{2'}(G)$ are of odd size.
    Theorem \ref{thm: bij real 2-blocks} then implies $G$ also has exactly one real $2$-block.
\end{proof}

Before we establish Theorem \ref{thmA}, we need the following key theorem.

\begin{thm}\label{thm: M11 M23}
  Let $G$ be a finite group satisfying $G=\mathrm{O}^{2'}(G)$, and let 
  $N = S_1 \times S_2 \times \cdots \times S_t$ be a minimal normal subgroup of $G$, 
  where $S_i = S^{x_i}$ for some $x_i \in G$ with $x_1 = 1$. 
  Assume that $G$ has a unique real $2$-block.
  If $S\in \{ M_{11},M_{23}, \mathrm{PSL}_{3}(3),\mathrm{PSU}_{3}(3) \}$,
  then the following hold.
  \begin{enumerate}[\rm (1)]
    \item $\mathrm{N}_{G}(S)=S\mathrm{C}_{G}(S)$. 
    \item $G=N\times \mathrm{C}_{G}(N)$.
    \item Let $E$ denote the subgroup of $G$ generated by all components of $G$ of type $M_{11}$, $M_{23}$, $\mathrm{PSL}_{3}(3)$ or $\mathrm{PSU}_{3}(3)$.
    Then $G=E\times \mathrm{C}_{G}(E)$, where $E$ decomposes as a direct product of simple groups, each isomorphic to one of $M_{11}$, $M_{23}$, $\mathrm{PSL}_3(3)$ or $\mathrm{PSU}_3(3)$.
  \end{enumerate}
\end{thm}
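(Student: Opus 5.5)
For part (1), I would argue by contradiction, in the spirit of Theorem \ref{thm: reduction}. Set $H=\mathrm{N}_G(S)$ and $C=\mathrm{C}_G(S)$, so that $H/C$ is almost simple with socle $SC/C\cong S$. For $S\in\{M_{11},M_{23}\}$ we have $\mathrm{Out}(S)=1$, so (1) is automatic. For $S\in\{\mathrm{PSL}_3(3),\mathrm{PSU}_3(3)\}$ we have $|\mathrm{Out}(S)|=2$, the outer automorphism being the graph automorphism. Suppose $H>SC$. Using $\mathsf{GAP}$, I would find a character $\alpha\in\mathrm{Irr}(S)\smallsetminus\mathrm{Irr}(B_0(S))$ (for instance a $2$-defect zero or non-principal-block character) that extends to a rational, hence real-valued, character $\psi$ of $\mathrm{Aut}(S)$. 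Inflating gives an extension of $\alpha\times 1_C$ to $H$ with $\mathbb{Q}(\psi)=\mathbb{Q}(\alpha)\subseteq\mathbb{R}$. Lemma \ref{lem: tensor induction} then produces $\chi=\psi^{\otimes G}\in\mathrm{Irr}(G)$ extending $\theta=\alpha\times\alpha^{x_2}\times\cdots\times\alpha^{x_t}$, and $\chi$ is real-valued. By uniqueness of the real $2$-block, $\chi\in B_0(G)$, so $\theta\in B_0(N)$ and therefore $\alpha\in B_0(S)$ by Lemma \ref{lem: cent prod of chars and blocks}, a contradiction.

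This works provided such an $\alpha$ exists when the automorphism is present. The point is that $S$ itself has a unique real $2$-block (Theorem \ref{thm: urb}), so non-principal characters come in complex-conjugate pairs, and the outer automorphism must fuse such a pair into a real character of $\mathrm{Aut}(S)$. If $\alpha$ is not $\mathrm{Aut}(S)$-invariant, I would instead take $J=H$ and $\beta=(\alpha\times 1_C)^H$, which is real-valued if the graph automorphism maps $\alpha$ to $\overline{\alpha}$. Then I would induce to $G$ as in Theorem \ref{thm: reduction}. Checking this character table fact for the two groups is the step I expect to be the main obstacle, together with making sure that the resulting character of $G$ lies over a non-principal-block character of $N$.

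For part (2), part (1) gives $H=SC$. Hence $G$ permutes the factors $S_i$, and $G/\mathrm{C}_G(N)$ embeds in $(S_1\times\cdots\times S_t)\cdot\mathsf{Sym}(t)$ acting by pure permutation. Let $K=N\mathrm{C}_G(N)$, the kernel of the permutation action. If $t>1$, I would choose $\alpha\notin B_0(S)$ and form $\theta=\alpha\times\alpha^{x_2}\times\cdots$. The conjugates $\alpha^{x_i}$ are well defined because $H=SC$. Since $H=SC$, the character $\alpha\times1_C$ extends to $\psi$ on $H$ with $\mathbb{Q}(\psi)=\mathbb{Q}(\alpha)$. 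If $\alpha$ could be chosen real, the tensor induction would be real, giving the contradiction above. But $S$ has no real non-principal characters, so I would instead use $\theta=\alpha\times\overline{\alpha}^{x_2}\times\cdots$ in a $G$-orbit stable under conjugation; this is where a transitive permutation group containing an element swapping factors is used. More simply, $G=\mathrm{O}^{2'}(G)$ forces $G/K$ to be generated by $2$-elements if $t>1$. Take an involution $g$ swapping $S_1$ and $S_2$, and consider $\theta=\alpha\times\overline{\alpha}^{g}\times 1\times\cdots$. Its $G$-orbit is conjugation-stable, so inducing from the inertia group and using Lemma \ref{lem: real chars} together with Clifford theory yields a real character of $G$ over a non-principal character of $N$, a contradiction. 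Hence $t=1$ and $G=SC$. Since $S\cap C=1$, we get $G=S\times C$ and so $G=N\times\mathrm{C}_G(N)$.

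For part (3), I would induct on the number of components of $E$. Each component $L$ of the listed types has $\mathrm{Z}(L)=1$ after applying Lemma \ref{lem: G/N unique real block, conjugacy size odd} and Theorem \ref{thm: bij real 2-blocks}. (I would check this: the Schur multipliers of these groups are trivial or of order $3$, and the odd part is killed by the odd class size condition, which gives a central order-$3$ subgroup that must be trivial since real elements there are trivial.) Thus each such $L$ generates a minimal normal subgroup $N$. Part (2) gives $G=L\times\mathrm{C}_G(L)$. Then $\mathrm{C}_G(L)\cong G/L$ has a unique real $2$-block by Lemma \ref{lem: basic}(3) and satisfies $\mathrm{O}^{2'}=$ itself, so induction applies to $\mathrm{C}_G(L)$ and the decomposition follows.
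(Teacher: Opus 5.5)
\textbf{The gap is in part (2).} The problem is the sentence ``Its $G$-orbit is conjugation-stable, so inducing from the inertia group and using Lemma \ref{lem: real chars} together with Clifford theory yields a real character of $G$.'' Lemma \ref{lem: real chars} needs odd index, and that is exactly what fails here. A conjugation-stable orbit does not by itself give a real-valued character over $\theta$. Put $T=\mathrm{I}_G(\theta)$ with $\theta^g=\overline{\theta}$. Complex conjugation on $\mathrm{Irr}(G|\theta)$ then corresponds to the involution $\psi\mapsto\overline{\psi}^{\,g^{-1}}$ on $\mathrm{Irr}(T|\theta)$, and this involution need not have a fixed point. Already $\mathsf{C}_2<\mathsf{C}_4$, with $\theta$ the nontrivial character of $\mathsf{C}_2$, violates the implication you use.

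What is missing is an explicit extension $\psi$ of $\theta$ to $T$ with $\psi^g=\overline{\psi}$. The paper builds one canonically by tensor induction (Lemma \ref{lem: tensor induction}), and that is where part (1) is used. With your simpler choice $\theta=\alpha\times\overline{\alpha}^{g}\times 1\times\cdots\times 1$ you can supply it directly:
\begin{itemize}
\item $T$ stabilises $\{S_1,S_2\}$. Since $\alpha\neq\overline{\alpha}$ and $\mathrm{N}_G(S_1)=S_1\mathrm{C}_G(S_1)$, no element of $T$ swaps $S_1$ and $S_2$.
\item Hence $T=S_1\times\mathrm{C}_T(S_1)=S_2\times\mathrm{C}_T(S_2)$.
\item Then $\psi=(\alpha\times 1_{\mathrm{C}_T(S_1)})(\overline{\alpha}^{g}\times 1_{\mathrm{C}_T(S_2)})$ extends $\theta$.
\item It satisfies $\psi^g=\overline{\psi}$, because $g$ normalises $T$ and $g^2$ acts on $S_1$ by inner automorphisms. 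So $\psi^G$ is real.
\end{itemize}
As written, though, the step is unjustified. Also, $g$ should be an element whose image in $G/\mathrm{N}\mathrm{C}_G(N)$ is an involution, not necessarily an involution of $G$.

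\textbf{Part (1).} Your fallback is the paper's argument. Take $\alpha$ a non-invariant $2$-defect-zero character; then $(\alpha\times 1_C)^H$ is real provided the outer automorphism sends $\alpha$ to $\overline{\alpha}$. That is a character-table check (done by $\mathsf{GAP}$ in the paper), not a consequence of Theorem \ref{thm: urb}. Inducing it to $G$ over $\alpha\times 1\times\cdots\times 1$ gives the contradiction. Your first plan there cannot work at all. If $\alpha$ extended to a real-valued character of $\mathrm{Aut}(S)$, then $\alpha$ would be real and hence lie in $B_0(S)$ by Theorem \ref{thm: urb}.

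\textbf{Part (3).} The parenthetical justification is wrong. The odd class size condition does not eliminate central odd-order subgroups: $3.M_{22}$ has a unique real $2$-block. What actually gives $\mathrm{Z}(L)=1$ is that the Schur multipliers of $M_{11}$, $M_{23}$, $\mathrm{PSL}_3(3)$ and $\mathrm{PSU}_3(3)$ are all trivial. With that fixed, your induction on the number of components is a valid alternative to the paper's intersection argument.
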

\begin{proof} 
  (1) Set $H=\mathrm{N}_{G}(S)$ and $C=\mathrm{C}_{G}(S)$. 
  Since $|\mathrm{Out}(S)|=1$ for $S\in \{ M_{11}, M_{23} \}$,
  the equality $H = SC$ holds trivially in these cases.
  We may therefore assume that $S \in \{ \mathrm{PSL}_3(3), \mathrm{PSU}_3(3) \}$.
  
  Suppose, for contradiction, that $H > SC$. As $|\mathrm{Out}(S)| = 2$, we must have $[H : SC] = 2$.    
   Let $\alpha\in \mathrm{Irr}(S)$ be a character of $2$-defect zero ($\alpha(1)=2^4$ if $S\cong \mathrm{PSL}_{3}(3)$, $\alpha(1)=2^5$ if $S\cong \mathrm{PSU}_{3}(3)$), and set
   $\theta=\alpha\times 1_{S_2}\times \cdots \times 1_{S_t}\in \mathrm{Irr}(N)$.
   Then $\theta\notin \mathrm{Irr}(B_0(N))$, and its inertia group satisfies 
   $\mathrm{I}_{G}(\theta)=\mathrm{I}_{H}(\alpha)=\mathrm{I}_{H}(\alpha \times 1_{C})$.
   Note that $H/C$ is an almost simple group with socle $SC/C\cong S$.
   Checking via $\mathsf{GAP}$ \cite{gap}, one verifies that there exists a real-valued character $\psi \in \mathrm{Irr}(H)$ of $2$-defect zero lying over $\alpha \times 1_C$, and hence also lying over $\theta$.
   By Clifford correspondence, the induced character $\chi := \psi^G \in \mathrm{Irr}(G)$. 
   Moreover, since $\psi$ is real-valued, $\chi$ is also real-valued.
   However, $\theta$ does not lie in $B_0(N)$, so $\chi$ cannot belong to the principal $2$-block $B_0(G)$,
   which contradicts that $G$ has a unique real $2$-block.

   (2) 
   Since $\mathrm{N}_{G}(S_i)=S_i \mathrm{C}_{G}(S_i)$ for every $i$ by part (1), and 
   $\bigcap_{i=1}^{t} S_i\mathrm{C}_{G}(S_i)=N \mathrm{C}_{G}(N)$,
   we immediately have 
   $$\bigcap_{i=1}^{t} \mathrm{N}_{G}(S_i)=N \mathrm{C}_{G}(N).$$
    Set $B=N\mathrm{C}_{G}(N)$.
    Then the quotient group $G/B$ acts faithfully and transitively on the set $\{ S_1,S_2,\dots ,S_t \}$.
    Let $\Omega=\{ 1,2,\cdots ,t \}$.
     For each $g \in G$ and each $i \in \Omega$, we may write
    $x_i g = h_i(g) x_{i\pi(g)}$ with $h_i(g) \in H$. 
    This defines a homomorphism $\pi \colon G \to \mathsf{Sym}(\Omega)$ with kernel $B$.
    In particular, $G/B\cong \pi(G)$.

We claim that $\pi(G)$ contains no involutions. 
Suppose, for contradiction, that $\pi(g)$ is an involution for some $g \in G$, with cycle decomposition
$(i_1\,j_1)(i_2\,j_2)\cdots(i_s\,j_s)$.
By inspecting the character table of $S$ via $\mathsf{GAP}$ \cite{gap},
we fix a $2$-defect zero character $\alpha \in \mathrm{Irr}(S)$ that is not real-valued, 
and a non-principal real-valued $\beta\in \mathrm{Irr}(S)$.
Let $\Delta=\Omega\smallsetminus\{ i_1,j_1,\dots ,i_s,j_s \}$ denote the set of indices fixed by $\pi(g)$,
and set
  $$\theta= \alpha^{x_{i_1}}\times \overline{\alpha}^{x_{j_1}}\times \cdots \times \alpha^{x_{i_s}}\times \overline{\alpha}^{x_{j_s}}\times \bigtimes_{k\in \Delta} \beta^{x_k}.$$
  Since $\alpha$ is not real-valued, $\theta \in \mathrm{Irr}(N)$ is not real-valued.
  Let $T=\mathrm{I}_{G}(\theta)$, and observe that $N$ decomposes as a direct product $N = N_1 \times N_2 \times \cdots \times N_m$, where each $N_i$ is a minimal normal subgroup of $T$.
  Correspondingly, $\theta$ factors as $\theta = \theta_1 \times \theta_2 \times \cdots \times \theta_m$, with each $\theta_i \in \mathrm{Irr}(N_i)$ invariant under $T$.
  Recall that $\mathrm{N}_{G}(S_j)=S_j\mathrm{C}_{G}(S_j)$ for all $j\in \Omega$.
  Applying Lemma \ref{lem: tensor induction} to $(T,N_i,\theta_i)$, 
  one has that
  every $\theta_i$ extends to the tensor induced character $\chi_i \in \mathrm{Irr}(T)$ satisfying 
  $\mathrm{C}_T(N_i) \leq \ker(\chi_i)$.
Set $\psi = \prod_{i=1}^{m} \chi_i$. Then
\[
\psi_N = \prod_{i=1}^{m} (\chi_i)_{N_i}=\theta,
\]
so $\psi \in \mathrm{Irr}(T)$. 
  We now show that $\psi^g=\overline{\psi}$. 
  Observe that
  $(\psi^g)_N=(\psi_N)^{g}=\theta^g=\overline{\theta}$, and that
  \[
   T^g=\mathrm{I}_{G}(\theta^g)=\mathrm{I}_{G}(\overline{\theta})=\mathrm{I}_{G}(\theta)=T.
  \]
  For each $1\leq i\leq m$, since the perfect group $N_i^g$ is minimal normal in $T$,
  and given that 
  $$N_i^g=(N_i^g\cap N_1)\times (N_i^g\cap N_2) \times  \cdots \times (N_i^g\cap N_m)$$ 
  by \cite[1.6.3(a)]{kurzweil04},
  $g$ permutes the minimal normal subgroups $N_1,N_2,\dots,N_m$ of $T$.
  For each $N_i$, the character $\overline{\theta_i}$ extends uniquely to $\overline{\chi_i}$ via the tensor induction construction of Lemma \ref{lem: tensor induction}. It follows that $\overline{\psi} = \prod_{i=1}^{m} \overline{\chi_i}$ is the unique extension of $\overline{\theta}$ to $T$ obtained by this procedure. 
 Since $\theta_i^g\in \mathrm{Irr}(N_i^{g})$ also extends uniquely to $\chi_i^{g}$ via the tensor induction construction of Lemma 2.2,
  it follows that $\psi^g=\prod_{i=1}^{m} \chi_i^g\in \mathrm{Irr}(T)$
  is also an extension of $\theta^g=\overline{\theta}$ of the same form.
  So, we deduce that $\psi^g = \overline{\psi}$.
Now set $\chi = \psi^G$. By Clifford correspondence, $\chi \in \mathrm{Irr}(G)$. Moreover, we have
\[
\overline{\chi} = \overline{\psi^G} = (\overline{\psi})^G = (\psi^g)^G = \psi^G = \chi,
\]
where the fourth equality holds because character induction is invariant under $G$-conjugation. Hence $\chi$ is a real-valued irreducible character of $G$.
Since $G$ has a unique real $2$-block, which is necessarily the principal $2$-block $B_0(G)$, we have $\chi \in \mathrm{Irr}(B_0(G))$. As an irreducible constituent of $\chi_N$, the character $\theta$ must then lie in $B_0(N)$. 
However, $\alpha^{x_{i_1}}\in \mathrm{Irr}(S_{i_1})$ is a $2$-defect zero character which does not lie in 
$B_0(S_{i_1})$, which contradicts Lemma \ref{lem: cent prod of chars and blocks}.

Therefore, $\pi(G)$ contains no involutions, and thus the quotient $G/B\cong \pi(G)$ has odd order. Since $G = \mathrm{O}^{2'}(G)$ admits no non-trivial quotient of odd order, it follows that $G = B =N\mathrm{C}_G(N) = N \times \mathrm{C}_G(N)$.

  (3) Note that the Schur multiplier of $S$ is trivial whenever $S\in \{ M_{11},M_{23}, \mathrm{PSL}_{3}(3),\mathrm{PSU}_{3}(3) \}$.
It follows that $E \unlhd G$ decomposes as a direct product of simple groups, each isomorphic to one of $M_{11}$, $M_{23}$, $\mathrm{PSL}_3(3)$ or $\mathrm{PSU}_3(3)$.
  So, we write $E = E_1 \times E_2 \times \cdots \times E_t$, where each $E_i$ is a minimal normal subgroup of $G$.
  By part (2), we have $G=E_i\times \mathrm{C}_{G}(E_i)$ for every $1\leq i\leq t$.
  Taking intersections over all indices $i$, we conclude that
  $$G=\bigcap_{i=1}^{t} E_i \mathrm{C}_{G}(E_i)=E \times \mathrm{C}_{G}(E).$$ 
\end{proof}

Let $\mathrm{F}^*(G)$ denote the generalized Fitting subgroup of a finite group $G$.
Recall that $\mathrm{F}^*(G)=\mathrm{F}(G)\circ \mathrm{E}(G)$ where $\mathrm{F}(G)$ is the Fitting subgroup of $G$,
and $\mathrm{E}(G)$ is the \emph{layer} of $G$,
defined as the subgroup generated by all components of $G$.
Moreover, a fundamental property of the generalized Fitting subgroup of $G$
is 
$$\mathrm{C}_{G}(\mathrm{F}^*(G))\leq \mathrm{F}^*(G).$$

Now, we are ready to prove Theorem \ref{thmA}.

\begin{proof}[Proof of Theorem \ref{thmA}]
  By Lemmas \ref{lem: basic} and \ref{lem: G/N unique real block, conjugacy size odd}, $G$ admits a unique real $2$-block if and only if $T:=N/\mathrm{O}_{2'}(N)$ has a unique real $2$-block,
  and every $N$-conjugacy class in $\mathrm{O}_{2'}(N)$ has odd size.
  It therefore suffices to prove that $T$ has a unique real $2$-block 
  precisely when part (2) holds.
  Let $E$ denote the subgroup of $T$ generated by all the components of $T$ of type $M_{11}$, $M_{23}$, $\mathrm{PSL}_{3}(3)$ or $\mathrm{PSU}_{3}(3)$, and set $H=\mathrm{C}_{T}(E)$.
  By the construction of $E$, every component of $H$ is not of these four types.

  We first suppose that $T$ has a unique real $2$-block. 
  Then either $E=1$, or Theorem \ref{thm: M11 M23} implies $T=E\times H$, where $E$ is a direct product of simple groups
  each isomorphic to one of $M_{11}$, $M_{23}$, $\mathrm{PSL}_3(3)$, or $\mathrm{PSU}_3(3)$. 
  By Lemma \ref{lem: basic}(3), $H$ also has a unique real $2$-block. 
  Applying Theorems \ref{thm: reduction} and \ref{thm: urb} to $H$, we deduce that all components of $H$ are 
  of type $M_{22}$ or $M_{24}$.
  Furthermore, if $\mathrm{E}(H)=1$, as $\mathrm{O}_{2'}(H)\leq \mathrm{O}_{2'}(T)=1$, we have $\mathrm{F}^*(H)=\mathrm{O}_2(H)$ and
  $$\mathrm{C}_{H}(\mathrm{O}_{2}(H))=\mathrm{C}_{H}(\mathrm{F}^*(H))\leq \mathrm{F}^*(H)=\mathrm{O}_{2}(H)$$
  i.e. $H$ is $2$-constrained.

  Conversely, assume $T=E\times H$, where $E$ and $H$ satisfy conditions (2)(i) and (2)(ii). 
  By Theorem \ref{thm: urb} and Lemma \ref{lem: basic}(3), $E$ has a unique real $2$-block. 
  Since $\mathrm{O}_{2'}(H)=1$, Theorem \ref{thm: ub} guarantees that $H$ also admits a unique real $2$-block.
  A further application of Lemma \ref{lem: basic}(3) yields that $T$ has a unique real $2$-block, which completes the argument.
\end{proof}




\bigskip

\begin{acknowledgement}
No AI tool contributed to the mathematical reasoning, proof construction, or content of this work. The
large-language model Doubao (Seed-2.1) was used solely for English language polishing.
Gap 4.13.1, available at \url{http://www.gap-system.org}, was
used for all computations in support of this project.

   The authors gratefully acknowledge the support of the NSF of China (No. 12671032, 12501029).
  The authors are also grateful to the referee for her/his valuable comments.
\end{acknowledgement}

\end{document}